\DeclareMathOperator\GRT{\text{GRT}}
\DeclareMathOperator\grt{\mathfrak{grt}}
\DeclareMathOperator\DAss{\text{DAss}}
\DeclareMathOperator\cotimes{\hat{\otimes}}
\DeclareMathOperator\bch{\text{BCH}}
\DeclareMathOperator\Lie{\text{Lie}}
\DeclareMathOperator\cFLiexy{\hat{\mathbb{F}}_{\Lie}(x,y)}
\DeclareMathOperator\PhiKZ{\Phi_{\text{KZ}}}
\DeclareMathOperator\PhiAKZ{\Phi_{\overline{\text{KZ}}}}
\DeclareMathOperator\PhiOH{\Phi_{\frac{1}{2}}}
\DeclareMathOperator\PhiAT{\Phi_{\text{AT}}}
\DeclareMathOperator\Badm{B_{\text{adm}}}
\DeclareMathOperator\psiOH{\psi_{\frac{1}{2}}}
\DeclareMathOperator\tpsi{\tilde{\psi}}
\DeclareMathOperator\Aadm{A_{\text{adm}}}
\newcommand{\displaybump}{\hbox to \@totalleftmargin{\hfil}}
\DeclareMathOperator\ad{ad}
\DeclareMathOperator\frakt{\mathfrak{t}}
\theoremstyle{definition}
\newtheorem{theorem}{Theorem}[section]
\newtheorem*{theorem*}{Theorem}
\newtheorem{lemma}[theorem]{Lemma}
\newtheorem*{lemma*}{Lemma}
\newtheorem{definition}[theorem]{Definition}
\newtheorem{remark}[theorem]{Remark}
\newtheorem*{remark*}{Remark}
\newtheorem{example}[theorem]{Example}
\newtheorem*{example*}{Example}
\newtheorem*{sketch of proof}{Sketch of Proof}
\newtheorem*{question}{Question}
\newtheorem*{fact*}{Fact}
\newtheorem{exampleprop}[theorem]{Example/Proposition}
\newtheorem{conjecture}[theorem]{Conjecture}
\theoremstyle{remark}
\newtheorem*{case1}{1. Case}
\newtheorem*{case2}{2. Case}
\theoremstyle{remark}
\newtheorem*{degree1}{Degree 1}
\newtheorem*{degree2}{Degree 2}
\DeclareSymbolFont{cyrletters}{OT2}{wncyr}{m}{n}
\DeclareMathSymbol{\sha}{\mathalpha}{cyrletters}{"58}
\title{On the irrationality of certain coefficients of the Alekseev-Torossian associator}
\author{Matteo Felder}
\begin{document}

\begin{abstract}
We give explicit formulas for the first few coefficients of the Alekseev-Torossian associator and a second Drinfeld associator defined in \cite{Rossi2014}. This is done by analyzing the free and transitive action of the Grothendieck-Teichm\"{u}ller group and its Lie algebra $\grt_1$ on the set of Drinfeld associators. As a result we obtain that, up to a conjecture on multiple zeta values, both associators are not rational. 
\end{abstract}

\maketitle
\tableofcontents
\section{Introduction}
A Drinfeld associator is a formal power series $\Phi(x,y)$ in two non-commuting variables $x$ and $y$ satisfying certain equations. For instance, the easiest equation is
\begin{equation}
\Phi(y,x)^{-1}=\Phi(x,y).
\end{equation}
The set of Drinfeld associators is known to be an infinite dimensional pro-algebraic variety. This means in particular, that there is a wealth of solutions to the aforementioned equations. However, despite this abundance and being studied intensively, Drinfeld associators remain somewhat mysterious objects. In fact, explicit constructions exist only for three associators, the Knizhnik-Zamolodchikov associator $\PhiKZ$, the anti-Knizhnik-Zamolodchikov associator $\PhiAKZ$ and the Alekseev-Torossian associator $\PhiAT$.

A crucial tool to generate new solutions and to relate existing ones to one another is the Grothendieck-Teichm\"{u}ller group $\GRT_1$. It acts freely and transitively on the set of Drinfeld associators. This means that there exists a unique element $\psi\in \GRT_1$ sending the Knizhnik-Zamolodchikov associator $\PhiKZ$ to the anti-Knizhnik-Zamolodchikov associator $\PhiAKZ$ via its action, and a unique element $\tilde{\psi}$ mapping $\PhiKZ$ to the Alekseev-Torossian associator $\PhiAT$. The group $\GRT_1$ is pro-unipotent, which means that it has a (pro-nilpotent) Lie algebra $\grt_1$ and that there exists a bijective exponential map $\exp:\grt_1\rightarrow \GRT_1$ between them. There is thus a unique element $g\in \grt_1$ satisfying $\psi=\exp(g)$. It had been conjectured by P. Etingof that
\begin{equation}
\tilde{\psi}^2=\psi.
\end{equation}
In \cite{Rossi2014}, C. Rossi and T. Willwacher settled this conjecture in the negative. In particular, this implies that by acting with the "square root" of $\psi$, $\psiOH=\exp\left(\frac{g}{2}\right)$, on $\PhiKZ$, one obtains a further associator $\PhiOH$. The relations between the mentioned Drinfeld associators is explained schematically below.
\begin{center}
\setlength{\unitlength}{0.8cm}
\begin{picture}(6,5)
\thicklines
\put(0.9,2.9){\line(2,1){2.}}
\put(3.9,2.6){\line(1,0){1.7}}
\put(1.3,2.6){\line(1,0){1.6}}
\put(0.3,2.5){$\PhiKZ$}
\put(5.8,2.5){$\PhiAKZ$}
\put(3,2.5){$\PhiOH$}
\put(3,4){$\PhiAT$}
\put(1.85,2.15){$\psiOH$}
\put(4.55,2.15){$\psiOH$}
\put(3.3,1.3){$\psi$}
\put(1.65,3.5){$\tilde{\psi}$}
\qbezier(0.9, 2.3)(3.45,0)(6,2.3)
\end{picture}
\end{center}
An interesting part of the theory on Drinfeld associator is the study of the coefficients of these power series, as for instance, they are closely related to multiple zeta values, i.e. the numbers 
\begin{equation}
\zeta(n_1,\dots,n_k)=\sum\limits_{j_1>\dots>j_k\geq 1}{\frac{1}{j_1^{n_1}\cdots j_k^{n_k}}\text{, with } n_1\geq 2, n_i\geq 1 \text{ for all }i\in \{2,\dots ,k\}}.
\end{equation}
In fact, all multiple zeta values appear within the coefficients of the Knizhnik-Zamolodchikov associator $\PhiKZ$. Multiple zeta values were already studied by L. Euler and occur in several branches of mathematics, but are still far from being fully understood. For example, there is an important conjecture on the algebraic relations over $\mathbb{Q}$ satisfied by (regularized) multiple zeta values. These are called shuffle and stuffle relations. The conjecture states that these generate all algebraic relations over $\mathbb{Q}$ among (regularized) multiple zeta values. Our main result (to be stated below) is true up to this conjecture.

One major problem on Drinfeld associators which remained unanswered for a long time is the following.
\begin{question}
Are there associators whose coefficients are all rational?
\end{question}
It was V. Drinfeld who proved that such rational associators must exist \cite{Drinfeld1991}. However, no such element has been found so far. To the author's knowledge, the question whether the Alekseev-Torossian associator $\PhiAT$ or the associator $\PhiOH$ are rational is still unanswered. They are therefore considered potential candidates for being rational associators. It is the aim of this work to tackle this problem. Our main result is the following theorem.
\begin{theorem*}
If the conjecture on multiple zeta values stated above is true, $\PhiAT$ and $\PhiOH$ are not rational associators.
\end{theorem*}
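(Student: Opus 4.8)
\emph{Proof strategy.} The idea is to use the free and transitive $\GRT_1$-action to replace both associators by an explicit finite computation involving only $\PhiKZ$ and $\PhiAKZ$, whose low-degree coefficients are classically known $\bbQ$-linear combinations of multiple zeta values. I normalise every associator to have parameter $\mu=1$ (this is preserved by the $\GRT_1$-action); with this normalisation the degree-$n$ coefficient of $\PhiKZ$ lies in $(2\pi i)^{-n}\mathcal{Z}_n$, where $\mathcal{Z}_n\subset\bbR$ is the $\bbQ$-span of the weight-$n$ multiple zeta values, and "rational associator" means that all coefficients lie in $\bbQ$. Since $\PhiOH=\psiOH\cdot\PhiKZ$ with $\psiOH=\exp(g/2)$, and since (as explained at the end) $\PhiAT$ shares the low-degree coefficients of $\PhiOH$, it is enough to compute the first few coefficients of $\exp(g/2)\cdot\PhiKZ$ and to exhibit one that is provably irrational granting the multiple zeta value conjecture.

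The first step is to determine $g\in\grt_1$ (the element with $\exp(g)=\psi$, i.e. $\psi\cdot\PhiKZ=\PhiAKZ$) in low degrees, by solving this equation degree by degree from the explicit formula for the $\GRT_1$-action and the tabulated coefficients of $\PhiKZ$. Recall that $\PhiAKZ$ is obtained from $\PhiKZ$ by complex conjugation of coefficients, so (since $\overline{(2\pi i)^{-n}}=(-1)^n(2\pi i)^{-n}$ and $\mathcal{Z}_n\subset\bbR$) the degree-$n$ coefficient of $\PhiAKZ$ equals $(-1)^n$ times that of $\PhiKZ$; in group-theoretic terms $\bar\psi=\psi^{-1}$, i.e. $\bar g=-g$. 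As $g$ is itself weight-homogeneous — its degree-$n$ part lying in $(2\pi i)^{-n}\mathcal{Z}_n\otimes\grt_1^{(n)}$ — and $(2\pi i)^{-n}\mathcal{Z}_n$ is real for even $n$, the relation $\bar g=-g$ forces $g$ to have vanishing even-degree parts; using the known facts $\grt_1^{(4)}=\grt_1^{(6)}=0$ and $\dim\grt_1^{(3)}=\dim\grt_1^{(5)}=1$, one reads off $g_3$ and $g_5$ as explicit nonzero elements of $(2\pi i)^{-3}\mathcal{Z}_3\otimes\grt_1^{(3)}$ and $(2\pi i)^{-5}\mathcal{Z}_5\otimes\grt_1^{(5)}$.

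The second step is to feed $g/2$ back into the action formula and compute the coefficients of $\PhiOH=\exp(g/2)\cdot\PhiKZ$ through degree $6$. Because $g$ has only odd-degree parts, the map $t\mapsto\big(\text{degree-}n\text{ coefficient of }\exp(tg)\cdot\PhiKZ\big)$ is linear in $t$ for $n\le 5$; combined with the fact that the degree-$n$ coefficient of $\PhiAKZ$ is $(-1)^n$ times that of $\PhiKZ$, a short "midpoint" computation then shows that all coefficients of $\PhiOH$ through degree $5$ are rational. In degree $6$, however, this map is quadratic in $t$, and the terms quadratic in $g_3$ (together with the $\zeta(3)^2$-part of the degree-$6$ coefficient of $\PhiKZ$) combine to a genuine correction: a suitable degree-$6$ coefficient of $\PhiOH$ equals $r\cdot\dfrac{\zeta(3)^2}{(2\pi i)^6}+q$ with $q\in\bbQ$ and $r\in\bbQ\setminus\{0\}$. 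Granting the multiple zeta value conjecture, $\dim\mathcal{Z}_6=2$ with basis $\{\zeta(6),\zeta(3)^2\}$, and since $\zeta(6)$ and $(2\pi i)^6$ are both rational multiples of $\pi^6$, we get $\zeta(3)^2/(2\pi i)^6\notin\bbQ$; hence this coefficient of $\PhiOH$ is irrational and $\PhiOH$ is not a rational associator. For $\PhiAT$: as $\GRT_1$ is pro-unipotent its square roots are unique, so $\tpsi$ agrees with $\psiOH=\exp(g/2)$ in every degree strictly below the first degree where $\tpsi^2$ and $\psi$ disagree, and by the computation of \cite{Rossi2014} this degree exceeds $6$; therefore $\PhiAT$ and $\PhiOH$ have the same coefficients through degree $6$, and $\PhiAT$ carries the same irrational coefficient. (Alternatively, the first coefficients of $\PhiAT$ may be computed directly from the Alekseev--Torossian construction, reaching the same conclusion.)

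The main obstacle is the computation underlying the second step: carrying the explicit $\GRT_1$-action up to degree $6$ while keeping careful track of all normalisation constants, and in particular proving that the rational number $r$ multiplying $\zeta(3)^2/(2\pi i)^6$ is nonzero — there is no way around a genuine, if finite, calculation here, and an accidental vanishing of $r$ would force one to pass to the next degree in which a non-rational multiple zeta value combination survives. Once $r\neq 0$ is in hand, deducing irrationality from the conjecture and transferring the conclusion from $\PhiOH$ to $\PhiAT$ are both routine.
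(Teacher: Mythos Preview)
Your overall strategy---use the free transitive $\GRT_1$-action to reduce everything to an explicit computation in $\PhiKZ$, solve for $g$ degree by degree, then read off a coefficient of $\psiOH\cdot\PhiKZ$ that the MZV conjecture forces to be irrational---is exactly the paper's strategy. The argument that $g$ has only odd-degree parts is correct and useful.

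The gap is your choice of degree. You assert that some degree-$6$ coefficient of $\PhiOH$ has the form $r\cdot\zeta(3)^2/(2\pi i)^6+q$ with $r\neq 0$, but you do not compute $r$, and in fact $r=0$. Carrying out the calculation with the paper's formula for $f_w$ (words $w=x^ayx^byx^c$ of length $6$) one finds, for example,
\[
f_{x^2yx^2y}=\frac{-\zeta(6)}{2(2\pi i)^6},\qquad
f_{x^3yxy}=\frac{-\tfrac{32}{105}\zeta(2)^3}{(2\pi i)^6},\qquad
f_{x^4y^2}=\frac{3\zeta(6)}{4(2\pi i)^6},
\]
all rational: in each case the $\zeta(3)^2$-part of the $\PhiKZ$ coefficient is exactly cancelled by the correction coming from $g_3$. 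This is not an accident. The correction term at length $6$ is built entirely from products of odd-length degree-$1$ coefficients, hence lies in $\bbQ\cdot\zeta(3)^2/(2\pi i)^6$, and the depth-$2$ weight-$6$ MZVs $\zeta(5,1),\zeta(4,2),\zeta(3,3),\zeta(2,4)$ are themselves $\bbQ$-combinations of $\zeta(3)^2$ and $\zeta(2)^3$; the two $\zeta(3)^2$-contributions match. So degree $6$ cannot detect irrationality.

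The paper goes to degree $8$ precisely because that is the first weight at which a genuinely irreducible MZV, $\zeta(3,5)$, appears in the conjectural basis. For $w=x^2yx^4y$ the correction term is a rational multiple of $\zeta(3)\zeta(5)/(2\pi i)^8$, while $u_w=\zeta(3,5)/(2\pi i)^8$; the $\zeta(3,5)$-part therefore survives intact, giving
\[
f_{x^2yx^4y}=\frac{2\zeta(3,5)-7\zeta(3)\zeta(5)}{512\,\pi^8},
\]
which the weight-$8$ conjecture renders irrational. For $\PhiAT$ the paper does \emph{not} reduce to $\PhiOH$; it computes directly from the path-ordered exponential description of $\tpsi$ and obtains a different degree-$8$ coefficient $\tilde f_{x^2yx^4y}=(2048\,\zeta(3,5)-6293\,\zeta(3)\zeta(5))/(524288\,\pi^8)$, again with surviving $\zeta(3,5)$-part. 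Since $\PhiAT$ and $\PhiOH$ already differ here, your proposed transfer from $\PhiOH$ to $\PhiAT$ via low-degree agreement would in any case have to be pushed to degree $8$ and justified more carefully than a citation of \cite{Rossi2014}.
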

To prove the theorem, we compute the coefficients of $\PhiAT$ and $\PhiOH$ for terms containing up to two $y$'s and arbitrary many $x$'s. We find that (if the conjecture is true) the coefficient of the term $x^2yx^4y$ is irrational for both associators. The computations are based on the comparison of the coefficients appearing in the relations between $\PhiKZ$, $\PhiAKZ$, $\PhiOH$ and $\PhiAT$ induced by the action of the Grothendieck-Teichm\"{u}ller group $\GRT_1$. This endeavour is in fact merely an entertaining combinatorial exercise.

\subsection*{Notation and conventions}
We work over a field $\mathbb{K}$ of characteristic $0$. The \emph{completed free Lie algebra in two generators} will be denoted by 
\begin{equation*}
\cFLiexy:=\prod_{n\geq 1}{\mathbb{F}_{\Lie}(x,y)_n}
\end{equation*}
where $\mathbb{F}_{\Lie}(x,y)$ is the free algebra in two generators and $\mathbb{F}_{\Lie}(x,y)_n$ is spanned by Lie words with $n-1$ brackets. Its topological universal enveloping algebra is $\mathbb{K}\left\langle\left\langle x,y \right\rangle\right\rangle$, i.e. formal power series in the non-commuting variables $x$ and $y$. On $\mathbb{K}\left\langle\left\langle x,y \right\rangle\right\rangle$, the coproduct $\Delta: \mathbb{K}\left\langle \left\langle x,y\right\rangle\right\rangle\rightarrow \mathbb{K}\left\langle \left\langle x,y\right\rangle\right\rangle\hat{\otimes}\mathbb{K}\left\langle \left\langle x,y\right\rangle\right\rangle$ is such that $x$ and $y$ are primitive, that is $\Delta(x)=x\cotimes 1 + 1 \cotimes x$ and $\Delta(y)=y\cotimes 1+ 1 \cotimes y$. Here, $\cotimes$ denotes the completed tensor product.

\subsection*{Acknowledgements} 
I am highly indebted and very grateful to Prof. Thomas Willwacher for his support and advice, and for introducing me to this topic in the context of my Master's thesis.

\section{Drinfeld associators and the Grothendieck-Teichm\"{u}ller group}
The following notions are developed in V. Drinfeld's paper \cite{Drinfeld1991}.

\begin{definition}
The \emph{Drinfeld-Kohno Lie algebra} $\frakt_n$ is the Lie algebra with generators $t_{ij}$, $1\leq i,j\leq n$, $i\neq j$, satisfying the relations
\begin{equation}
[t_{ij},t_{kl}]=0 \text{ for i,j,k,l pairwise distinct,}
\end{equation}
\begin{equation}
[t_{ij},t_{ik}+t_{kj}]=0 \text{ for i,j,k pairwise distinct.}
\end{equation}
\end{definition}

\begin{definition}\label{def: DAss}
A \emph{Drinfeld associator} is a pair $(\mu,\Phi)\in \mathbb{K}^{\times}\times \mathbb{K}\left\langle \left\langle x,y \right\rangle\right\rangle$ such that $\Phi$ is group-like (i.e. $\Delta(\Phi)=\Phi\hat{\otimes}\Phi$) and satisfies the following equations
\begin{align}
\label{eq:DAss1}\Phi(x,y) &= \Phi(y,x)^{-1} \\
	\label{eq:DAss2} 1 &= e^{\frac{\mu}{2}z}\Phi(x,y)e^{\frac{\mu}{2}x}\Phi(y,z)e^{\frac{\mu}{2}y}\Phi(z,x) \\
	\label{eq:DAss3}\Phi(t_{12},t_{23}+t_{24})\Phi(t_{13}+t_{23},t_{34}) &= \Phi(t_{23},t_{34})\Phi(t_{12}+t_{13},t_{24}+t_{34})\Phi(t_{12},t_{23}).
\end{align}
We ask that $x+y+z=0$ in \eqref{eq:DAss2} and that the last equation takes values in the universal enveloping algebra of the Drinfeld-Khono Lie algebra $\frakt_4$. We denote the \emph{set} of Drinfeld associators by $\DAss$.
\end{definition}

\begin{definition}
The \emph{Grothendieck-Teichm\"{u}ller group} $\GRT_1$  is the pro-unipotent group whose elements are solutions $\Phi$ of the equations from Definition \ref{def: DAss} for $\mu=0$. The group operation is given by
\begin{equation}
(\Phi\cdot\Phi')(x,y)=\Phi(x,y)\Phi'(x,\Phi(x,y)^{-1}y\Phi(x,y)),
\end{equation}
for  $\Phi(x,y),\Phi'(x,y)$ in $\GRT_1$ and where the product on the right is the usual product in $\mathbb{K}\left\langle \left\langle x,y \right\rangle\right\rangle$.
\end{definition}
\begin{remark}
The Grothendieck-Teichm\"{u}ller group $\GRT_1$ acts freely and transitively on the set of Drinfeld associators $\DAss$. The action is via
\begin{align*}
\GRT_1\times \DAss &\rightarrow \DAss\\
(\Psi,(\mu,\Phi))&\mapsto (\mu,\Psi\cdot\Phi)
\end{align*}
where the multiplication in the second component is defined in the same manner as the group operation on $\GRT_1$.
\end{remark}

\begin{definition}
We define the \emph{Grothendieck-Teichm\"{u}ller Lie algebra} $(\grt_1,\{\cdot,\cdot\})$ to be given by series $\psi\in \hat{\mathbb{F}}_{\Lie}(x,y)$ satisfying
\begin{align}
\psi(x,y)+\psi(y,x)&=0\\
\psi(x,y)+\psi(y,z)+\psi(z,x)&=0\\
\psi(t_{12},t_{23})+\psi(t_{12}+t_{13},t_{24}+t_{34})+\psi(t_{23},t_{34})&\\
\mathop{-}\psi(t_{12},t_{23}+t_{24})-\psi(t_{13}+t_{23},t_{34})&=0, \nonumber
\end{align}
where $x+y+z=0$ and the last equation takes place in $\frakt_4$. The Lie bracket on $\grt_1$ is given by
\begin{equation}\label{eq:Ihara}
\left\{\psi,\psi'\right\}(x,y)=\left[\psi(x,y),\psi'(x,y)\right]+D_{\psi}\psi'(x,y)-D_{\psi'}\psi(x,y)
\end{equation}
where $D_{\psi}$ is the derivation of the free Lie algebra sending $x$ to $x$ and $y$ to $\left[y,\psi\right]$. The bracket $\{\cdot,\cdot\}$ is sometimes referred to as \emph{Ihara bracket}. 
\end{definition}
The Grothendieck-Teichm\"{u}ller Lie algebra $\grt_1$ is pro-nilpotent. The group $\GRT_1$ can be viewed as the exponential group of $\grt_1$, i.e. $\GRT_1=\exp(\grt_1)$. There is also an action of $\grt_1$ on $\DAss$. It is given by
\begin{align*}
\grt_1\times\DAss&\rightarrow \DAss\\
(\gamma(x,y),(\mu,\Phi))&\mapsto (\mu,\gamma(x,y)\Phi(x,y)+[y,\gamma]\partial_y\Phi(x,y))
\end{align*}
where the first product is the usual product in $\mathbb{K}\left\langle \left\langle x,y \right\rangle\right\rangle$ and $[y,\gamma]\partial_y$ acts as a derivation on $\mathbb{K}\left\langle \left\langle x,y \right\rangle\right\rangle$ sending $x$ to $0$ and $y$ to $[y,\gamma]$.
\begin{example}
Consider the term $x^2yxyx^3$. We have that,
\begin{equation*}
[y,\gamma]\partial_{y}x^2yxyx^3=x^2\left[y,\gamma\right]xyx^3+x^2yx\left[y,\gamma\right]x^3.
\end{equation*}
In the above, this procedure is done for every term of $\Phi$ in a linear way.
\end{example}

\section{Multiple zeta values}
As we will see in the next section, Drinfeld associators are closely related to multiple zeta values which we introduce here. These were already studied by L. Euler. For instance, they appear in \cite{Euler1775}, a text from 1775. In \cite{Euler1735}, he computes the values $\zeta(2n)$ for $n\in\{1,2,3,4,5,6\}$ using a method which works for general $n$.

\begin{definition}
\textit{Multiple zeta values} are given by expressions of the form
\begin{equation}
\zeta(n_1,\dots,n_k):=\sum_{j_1>j_2>\cdots>j_k\geq 1}{\frac{1}{j_{1}^{n_{1}}j_{2}^{n_{2}}\cdots j_k^{n_k}}}
\end{equation}
which are defined as long as $n_1\geq 2$, $n_i\geq 1$ for $i=2,\dots,k$.
\end{definition}

\begin{definition}
Let $B$ be the vector space of formal linear combinations of words $w$ in $x$ and $y$. We say that a word $w$ is \textit{admissible in }B if it is empty or starts with an $x$ and ends in a $y$.
\end{definition}

Every admissible word may be thus written as $x^{n_1-1}yx^{n_2-1}y\cdots x^{n_k-1}y$, with $n_1\geq 2$ and $n_2,\dots,n_k\geq 1$. Let $\Badm$ denote the vector space of formal linear combinations of admissible words. We define a linear map
\begin{equation*}
\zeta:\Badm\rightarrow \mathbb{R}
\end{equation*}
by setting $\zeta(w)=\zeta(n_1,\dots,n_k)$ on words and extending linearly. Moreover, we set $\zeta(\emptyset):=1$.

\begin{definition}
We define the \textit{shuffle product} $\ast$ on $B$ recursively by,
\begin{equation}
\alpha w \ast \alpha' w':=\alpha(w\ast\alpha' w')+\alpha'(\alpha w \ast w'),
\end{equation}
where $w,w'$ are elements of $B$ and $\alpha,\alpha'$ denote the first letter of the words $\alpha w$, $\alpha' w'$, respectively. Furthermore, we set $w\ast \emptyset=\emptyset\ast w=w$ for all $w\in B$.
\end{definition}
The shuffle product of two words corresponds to the sum of all ways of interlacing them. It is associative and commutative. The map $\zeta:(\Badm,\ast)\rightarrow \mathbb{R}$ is a homomorphism of commutative algebras . In fact, this map extends uniquely to a morphism of algebras $\zeta:(B,\ast)\rightarrow \mathbb{R}$ such that $\zeta(x)=\zeta(y)=0$. This regularizing process is maybe best explained by looking at a simple example. The multiple zeta values which may be assigned also to non-admissible words in this way are called \emph{shuffle regularized multiple zeta values}.
\begin{example}
The word $w=x^2yx^2$ ends with an $x$ and is thus not admissible. We may rewrite it as,
\begin{align*}
x^2yx^2 &= x^2yx\ast x - x^2yx^2-x^3yx-x^3yx-x^3yx\\
\Leftrightarrow x^2yx^2 &= \frac{1}{2}(x^2yx\ast x -3x^3yx)=\frac{1}{2}(x^2yx\ast x-3(x^3y\ast x - 4x^4y))\\
&=\frac{1}{2}(x^2yx\ast x-3x^3y\ast x +12x^4y)
\end{align*}
Therefore, 
\begin{align*}
\zeta(x^2yx^2)&=\frac{1}{2}(\zeta(x^2yx\ast x)-3\zeta(x^3y\ast x) +12\zeta(x^4y))\\
&=\frac{1}{2}(\zeta(x^2yx)\zeta(x)-3\zeta(x^3y)\zeta(x) + 12\zeta(x^4y))\\
&=0+0+6\zeta(x^4y)=6\zeta(x^{5-1}y)=6\zeta(5).
\end{align*}
\end{example}
This example may be generalized to the following statement.
\begin{lemma}\label{lemma:zeta}
Let $w=x^ayx^b$. Then $\zeta(w)=(-1)^b\cfrac{(a+b)!}{a!b!}\zeta(a+b+1)$.
\end{lemma}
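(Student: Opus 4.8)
The plan is to induct on $b$. The base case $b = 0$ is immediate: $x^a y$ is admissible, $\zeta(x^a y) = \zeta(a+1) = \zeta(x^{(a+1)-1}y)$, and $(-1)^0 \frac{a!}{a!\,0!} = 1$. For the inductive step the word $x^a y x^b$ is non-admissible (it ends in $x$) whenever $b \geq 1$, so the strategy is to rewrite it, up to a shuffle product, in terms of a word with one fewer trailing $x$ and one more leading $x$, and then invoke the inductive hypothesis.

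First I would establish the shuffle identity
\begin{equation*}
x^a y x^{b-1} \ast x = (a+1)\,x^{a+1} y x^{b-1} + b\,x^a y x^b .
\end{equation*}
This follows by counting: shuffling the length-$(a+b)$ word $x^a y x^{b-1}$ with the single letter $x$ amounts to inserting that $x$ into one of the $a+b+1$ slots of the word; the $a+1$ slots lying among the leading block of $x$'s or just before the $y$ each yield $x^{a+1} y x^{b-1}$, while the $b$ slots lying just after the $y$ or among the trailing block of $x$'s each yield $x^a y x^b$. Rearranging gives
\begin{equation*}
x^a y x^b = \frac{1}{b}\left( x^a y x^{b-1} \ast x - (a+1)\,x^{a+1} y x^{b-1} \right).
\end{equation*}

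Next I would apply the regularized morphism of algebras $\zeta : (B,\ast) \to \mathbb{R}$, which satisfies $\zeta(x) = 0$. The shuffle term contributes $\zeta(x^a y x^{b-1})\,\zeta(x) = 0$, leaving
\begin{equation*}
\zeta(x^a y x^b) = -\frac{a+1}{b}\,\zeta(x^{a+1} y x^{b-1}).
\end{equation*}
Since $(a+1) + (b-1) = a+b$, the inductive hypothesis applied to $x^{a+1} y x^{b-1}$ gives $\zeta(x^{a+1} y x^{b-1}) = (-1)^{b-1}\frac{(a+b)!}{(a+1)!\,(b-1)!}\,\zeta(a+b+1)$. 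Substituting and using the elementary identity $\frac{a+1}{b}\cdot\frac{(a+b)!}{(a+1)!\,(b-1)!} = \frac{(a+b)!}{a!\,b!}$ yields $\zeta(x^a y x^b) = (-1)^b \frac{(a+b)!}{a!\,b!}\,\zeta(a+b+1)$, as claimed; a sanity check against the worked example $x^2yx^2$ (with $a=b=2$) indeed returns $6\zeta(5)$.

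The one step that requires genuine attention is the combinatorial bookkeeping in the shuffle identity — making sure the insertion slots are partitioned correctly into those producing $x^{a+1} y x^{b-1}$ and those producing $x^a y x^b$; everything else is formal manipulation together with a binomial-coefficient identity.
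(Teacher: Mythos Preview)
Your proof is correct and is essentially the same argument as the paper's: both establish the recursion $\zeta(x^a y x^b) = -\frac{a+1}{b}\,\zeta(x^{a+1} y x^{b-1})$ via the shuffle $x^a y x^{b-1} \ast x$ and the vanishing $\zeta(x)=0$, and then reduce to the admissible case $b=0$. The only cosmetic difference is that you package the descent as a formal induction on $b$, whereas the paper unrolls the recursion explicitly as a telescoping product $(-1)^b\frac{(a+1)\cdots(a+b)}{b!}$.
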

\begin{proof}
We will have to consider the shuffle product of $x^ayx^{b-1}$ and $x$. For this we need to look at all ways of interlacing the two words and to illustrate this process, we mark the word $x$ by $\underline{x}$. Write
\begin{align*}
\zeta(x^ayx^b)=\zeta(x^ayx^{b-1})\zeta(x)&-\zeta(x^ayx^{b-2}\underline{x}x)\\
&-\zeta(x^ayx^{b-3}{\underline{x}}x^2)\\
&-\dots\\
&-\zeta(x^ay{{\underline{x}}} x^{b-1})\\
&-\zeta(x^a{\underline{x}} yx^{b-1})\\
&-\zeta(x^{a-1}{\underline{x}}xyx^{b-1})\\
&-\dots\\
&-\zeta({\underline{x}}x^ayx^{b-1})\\
=0-(b-1)\zeta(x^ayx^b)&-(a+1)\zeta(x^{a+1}yx^{b-1})\\
\end{align*}
Hence,
\begin{align*}
\zeta(x^ayx^b)&=-\frac{a+1}{b}\zeta(x^{a+1}yx^{b-1})\\
&=\frac{a+1}{b}\cdot\frac{a+2}{b-1}\zeta(x^{a+2}yx^{b-2})\\
&=\dots\\
&=(-1)^b\frac{(a+1)\cdots(a+b)}{b!}\zeta(x^{a+b}y)=(-1)^b\cfrac{(a+b)!}{a!b!}\zeta(a+b+1)
\end{align*}
\end{proof}

The relations between multiple zeta values which are obtained using the shuffle product are called \textit{shuffle relations}. To illustrate this notion, we look at the following example.
\begin{example}
We may write $\zeta(2)$ as $\zeta(xy)$. This way,
\begin{align*}
\zeta(2)^2&=\zeta(xy)\zeta(xy)=\zeta(xy\ast \underline{xy})\\
&=\zeta(xy\underline{xy})+\zeta(x\underline{x}y\underline{y})+\zeta(x\underline{xy}y)+\zeta(\underline{x}xy\underline{y})+\zeta(\underline{x}x\underline{y}y)+\zeta(\underline{xy}xy)\\
&=4\zeta(x^2y^2)+2\zeta(xyxy)=4\zeta(3,1)+2\zeta(2,2).
\end{align*}
Again, we have marked the letters of the word $xy$, so that we may keep track of them as we shuffle.
\end{example}

Next we consider the vector space $A$ of formal linear combinations of words $u=n_1\cdots n_k$ with letters $n_i$ from the alphabet $\mathbb{N}$. Again there is a notion of \textit{admissible words}. We say that $u=n_1\cdots n_k$ is \textit{admissible in }A if $n_1\geq 2$ (or $k=0$). Denote by $\Aadm$ the vector space of formal linear combinations of admissible words. In analogy to the previous construction, we define a linear map 
\begin{equation*}
\zeta:\Aadm\rightarrow \mathbb{R}
\end{equation*}
by setting $\zeta(u)=\zeta(n_1,\dots,n_k)$ on words $u=n_1\cdots n_k$ and extending linearly. Moreover, $\zeta(\emptyset):=1$.
\begin{definition}
On $A$ we may define recursively the \textit{stuffle product} $\sha$, by
\begin{equation}
nw\sha n'w':=n(w\sha n'w')+(n+n')(w\sha w')+n'(nw\sha w'),
\end{equation}
where $n,n'\in \mathbb{N}$ and $w,w'$ are words. Furthermore, set $u\sha \emptyset=\emptyset\sha u=u$.
\end{definition}
\begin{example}
Simply by applying the definition,
\begin{align*}
23\sha 5&=2(3\sha 5)+(2+5)(3\sha \emptyset)+5(23\sha \emptyset)\\
&=2(3(\emptyset\sha 5)+8(\emptyset\sha\emptyset)+5(3\sha\emptyset))+73+523\\
&=235+28+253+73+523.
\end{align*}
\end{example}
The stuffle product is associative and commutative. Moreover, the map $\zeta:(\Aadm,\sha)\rightarrow \mathbb{R}$ defines an algebra homomorphism, i.e. for admissible words $u,u'$, $\zeta(u\sha u')=\zeta(u)\zeta(u')$. It may be extended uniquely to a map of algebras $\zeta:(A,\sha)\rightarrow \mathbb{R}$ such that $\zeta(1)=0$.
As before, the regularizing process is probably best explained via a simple example. The multiple zeta values which may be assigned also to non-admissible words in this way are called \textit{stuffle regularized multiple zeta values}.
\begin{example}
The word $w=123$ begins with a one and is therefore not admissible. We note that
\begin{align*}
1\sha 23&=1(\emptyset\sha 23)+(1+2)(\emptyset\sha 3)+2(1\sha 3)\\
&=123+33+2\left(1(\emptyset\sha 3)+(1+3)(\emptyset\sha\emptyset)+3(1\sha\emptyset)\right)\\
&=123+33+213+24+231.
\end{align*}
Therefore, by applying the map $\zeta$
\begin{align*}
\zeta(1,2,3)=\zeta(123)&=\zeta(1\sha 23)-\zeta(33)-\zeta(213)-\zeta(24)-\zeta(231)\\
&=\zeta(1)\zeta(23)-\zeta(33)-\zeta(213)-\zeta(24)-\zeta(231)\\
&=-\zeta(3,3)-\zeta(2,1,3)-\zeta(2,4)-\zeta(2,3,1).
\end{align*}
\end{example}
The relations between multiple zeta values which are obtained using the stuffle product are called \textit{stuffle relations}.
\begin{example}
We will make use of the following stuffle relation. Let $r,s\in \mathbb{N}$.
\begin{align*}
\zeta(r)\zeta(s)&=\zeta(r\sha s)=\zeta(r(\emptyset\sha s)+(r+s)(\emptyset\sha\emptyset)+s(r\sha\emptyset))\\
&=\zeta(rs)+\zeta(r+s)+\zeta(sr)=\zeta(r,s)+\zeta(r+s)+\zeta(s,r)
\end{align*}
\end{example}

It is conjectured (though apparently with no solution in sight) that the shuffle and stuffle relations, together with a relation which relates the shuffle and stuffle relations to each other \cite{Furusho}, generate \textit{all} algebraic relations satisfied by the regularized shuffle and the regularized stuffle multiple zeta values over $\mathbb{Q}$. If this were the case, then the conjecture below would be true.
\begin{definition}
Consider $\zeta(n_1,\dots,n_k)$ with $n_1\geq 2,n_i\geq 1$ for $i=2,\dots,k$. We call $N=n_1+\dots+n_k$ the \textit{weight} of the multiple zeta value $\zeta(n_1,\dots,n_k)$.
\end{definition}

A reference for the following conjecture is F. Brown's paper on the decomposition of multiple zeta values \cite{Brown2012}. 
\begin{conjecture}\label{conjecture}
Let $\mathcal{Z}_N$ denote the $\mathbb{Q}$-vector space spanned by the set of multiple zeta values $\zeta(n_1,\dots,n_k)$ with $n_1\geq 2,n_i\geq 1$ for $i=2,\dots,k$ of weight $N$. Up to weight 8, the spaces $\mathcal{Z}_N$ are \textit{conjectured} to have the following bases over $\mathbb{Q}$:\newline
\begin{center}
\begin{tabular}{|c|c|c|c|c|c|c|c|c|}
\hline
Weight $N$ & 1 & 2 & 3 & 4 & 5 & 6 & 7 & 8\\\hline
$\mathbb{Q}$-Basis for $\mathcal{Z}_N$ & $\emptyset$ & $\zeta(2)$ & $\zeta(3)$ & $\zeta(2)^2$ & $\zeta(5)$ & $\zeta(3)^2$ & $\zeta(7)$ & $\zeta(3,5)$ \\
  & & & & & $\zeta(3)\zeta(2)$ & $\zeta(2)^3$ & $\zeta(5)\zeta(2)$ & $\zeta(3)\zeta(5)$\\
	& & & & & & & $\zeta(3)\zeta(2)^2$ & $\zeta(3)^2\zeta(2)$ \\
	& & & & & & & & $\zeta(2)^4$\\\hline
\end{tabular}
\end{center}
\end{conjecture}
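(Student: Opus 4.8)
\emph{A caveat first: the statement is genuinely a conjecture --- no unconditional proof is known, and it is exactly this conjecture on which the main theorem of the present paper has to be made to depend. What can be described is the strategy one would follow and the precise point at which it stalls.}

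I would split the claim ``each listed family is a $\mathbb{Q}$-basis of $\mathcal{Z}_N$'' into a \emph{spanning} part and a \emph{linear independence} part, and treat both by passing to the motivic world. To each real multiple zeta value $\zeta(n_1,\dots,n_k)$ one attaches its motivic lift $\zeta^{\mathfrak{m}}(n_1,\dots,n_k)$, an element of the graded Hopf algebra $\mathcal{H}=\bigoplus_N \mathcal{H}_N$ of motivic periods of the Tannakian category of mixed Tate motives over $\mathbb{Z}$ (built by Goncharov and by Deligne--Goncharov, the rigidity input coming from Terasoma and Deligne--Goncharov). There is a surjective period homomorphism $\mathrm{per}\colon \mathcal{H}_N \twoheadrightarrow \mathcal{Z}_N$ with $\mathrm{per}(\zeta^{\mathfrak{m}}) = \zeta$, so $\dim_{\mathbb{Q}}\mathcal{Z}_N \le \dim_{\mathbb{Q}}\mathcal{H}_N$ for all $N$.

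For the spanning part, Brown's theorem \cite{Brown2012} asserts that the motivic multiple zeta values of weight $N$ whose arguments all lie in $\{2,3\}$ span $\mathcal{H}_N$; their number equals the coefficient $d_N$ of $t^N$ in $\frac{1}{1-t^2-t^3}$, i.e.\ $d_0=1$, $d_1=0$, $d_2=1$ and $d_N = d_{N-2}+d_{N-3}$, reproducing the column lengths $0,1,1,1,2,2,3,4$ of the table for weights $1$ through $8$. Applying $\mathrm{per}$, the real $\{2,3\}$-multiple zeta values span $\mathcal{Z}_N$; a finite, completely explicit change of basis in each weight $\le 8$ --- using Euler's reduction $\zeta(2n) \in \mathbb{Q}\,\zeta(2)^n$, the evaluations $\zeta(\underbrace{2,\dots,2}_m) \in \mathbb{Q}\,\zeta(2)^m$, Euler's sum formula, the stuffle relation $\zeta(r,s)+\zeta(s,r) = \zeta(r)\zeta(s) - \zeta(r+s)$ obtained above, and the classical relations among weight $\le 8$ double zeta values --- then rewrites this spanning set as the exact monomials listed in the table.

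For the independence part, Brown's theorem in fact gives that the $\{2,3\}$-motivic multiple zeta values form a \emph{basis} of $\mathcal{H}_N$, so $\dim_{\mathbb{Q}}\mathcal{H}_N = d_N$ and the same change of basis shows the table's families are $\mathbb{Q}$-bases of $\mathcal{H}_N$. What remains is to prove that $\mathrm{per}\colon\mathcal{H}_N\to\mathcal{Z}_N$ is \emph{injective}, equivalently that the real numbers in each row of the table are $\mathbb{Q}$-linearly independent, equivalently that $\dim_{\mathbb{Q}}\mathcal{Z}_N$ is not strictly smaller than $d_N$. For weight $\le 4$ this is elementary ($\zeta(2)$ and $\zeta(3)$ irrational, nothing else to check). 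From weight $5$ onward it is precisely where the plan stalls: the needed independence of $\{\zeta(5),\zeta(3)\zeta(2)\}$, of $\{\zeta(3)^2,\zeta(2)^3\}$, and so on, is a transcendence statement that would follow from Grothendieck's period conjecture for mixed Tate motives over $\mathbb{Z}$ but is far beyond current technology --- the state of the art being only the irrationality of individual odd zeta values (Ap\'ery for $\zeta(3)$; Ball--Rivoal and Zudilin for infinitely many $\zeta(2n+1)$). So the honest summary is: the upper bound $\dim_{\mathbb{Q}}\mathcal{Z}_N \le d_N$ and the spanning statement are theorems obtained through the motivic formalism, whereas the matching lower bound $\dim_{\mathbb{Q}}\mathcal{Z}_N \ge d_N$ --- the real content of the conjecture, and the genuine obstacle --- is open, which is why the theorem proved in this paper must remain conditional.
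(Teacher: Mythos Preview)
Your assessment is correct and, in fact, more thorough than anything the paper attempts: the statement is labeled a \emph{conjecture} in the paper and is never proved there --- it is simply quoted, with a reference to Brown's work, and then used as a standing hypothesis for the main results. So there is no ``paper's own proof'' to compare against.

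Your discussion of what is actually known is accurate: the upper bound $\dim_{\mathbb{Q}}\mathcal{Z}_N \le d_N$ and the spanning statement follow from the motivic theory and Brown's theorem, while the matching lower bound (i.e.\ the $\mathbb{Q}$-linear independence of the listed families) is equivalent to injectivity of the period map on the relevant weights and remains open beyond trivial cases. This is precisely why the paper's main theorems are stated conditionally on the conjecture. The only minor quibble is that even the weight-$4$ case is not quite ``elementary'': the irrationality of $\zeta(2)^2$ is clear, but knowing that $\mathcal{Z}_4$ is one-dimensional over $\mathbb{Q}$ already uses the upper bound, not just the irrationality of a single number.
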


\begin{example}
This would imply that, for instance, there must be a relation between the multiple zeta values $\zeta(3)$ and $\zeta(2,1)$ which are both of weight 3. Indeed, the stuffle relation gives,
\begin{equation*}
0=\zeta(2)\zeta(1)=\zeta(2,1)+\zeta(3)+\zeta(1,2)
\end{equation*}
as $\zeta(1)=\zeta(y)=0$. On the other hand, shuffle relations yield,
\begin{equation*}
\zeta(1,2)=-2\zeta(2,1)
\end{equation*}
which eventually implies
\begin{equation*}
\zeta(3)=\zeta(2,1).
\end{equation*}
\end{example}

Next, we consider a somewhat more elaborate example.
\begin{exampleprop}
In the conjectural $\mathbb{Q}$-basis $\left\{\zeta(2)^3,\zeta(3)^2\right\}$ for the space of multiple zeta values of weight 6, $\zeta(4,2)$ may be expressed as
\begin{equation*}
\zeta(4,2)=\zeta(3)^2-\frac{32}{105}\zeta(2)^3.
\end{equation*}
\end{exampleprop}
\begin{proof}
Using the shuffle relations, we find
\begin{align*}
\zeta(4)\zeta(2)=&\zeta(x^3y\ast xy)\\
=&4\zeta(x^3yxy)+8\zeta(x^4y^2)+2\zeta(x^2yx^2y)+\zeta(xyx^3y)\\
=&4\zeta(4,2)+8\zeta(5,1)+2\zeta(3,3)+\zeta(2,4)
\end{align*}
The stuffle relation
\begin{equation*}
\zeta(r)\zeta(s)=\zeta(r,s)+\zeta(r+s)+\zeta(s,r)
\end{equation*}
gives
\begin{equation*}
\zeta(2,4)=\zeta(4)\zeta(2)-\zeta(6)-\zeta(4,2)
\end{equation*}
and
\begin{equation*}
2\zeta(3,3)=\zeta(3)^2-\zeta(6).
\end{equation*}
Moreover, we have the following identity (apparently due to Euler\cite{Broadhurst}). For $a>1$,
\begin{equation*}
\zeta(a,1)=\frac{5}{2}\zeta(a+1)-\frac{1}{2}\sum\limits_{b=2}^{a}{\zeta(a+1-b)\zeta(b)}.
\end{equation*}
This allows us to write
\begin{align*}
\zeta(5,1)&=\frac{5}{2}\zeta(6)-\frac{1}{2}\left(\zeta(4)\zeta(2)+\zeta(3)^2+\zeta(2)\zeta(4)\right)\\
&=\frac{5}{2}\zeta(6)-\frac{1}{2}\zeta(3)^2-\zeta(2)\zeta(4).
\end{align*}
Therefore,
\begin{align*}
\zeta(4)\zeta(2)=&4\zeta(4,2)+8\zeta(5,1)+2\zeta(3,3)+\zeta(2,4)\\
=&4\zeta(4,2)+8\left(\frac{5}{2}\zeta(6)-\frac{1}{2}\zeta(3)^2-\zeta(2)\zeta(4)\right)+\zeta(3)^2-\zeta(6)\\
&+\zeta(4)\zeta(2)-\zeta(6)-\zeta(4,2)\\
=&3\zeta(4,2)+18\zeta(6)-3\zeta(3)^2-7\zeta(2)\zeta(4)\\
\Leftrightarrow 3\zeta(4,2)=&3\zeta(3)^2+8\zeta(2)\zeta(4)-18\zeta(6).
\end{align*}
Now, using $\zeta(2)=\frac{\pi^2}{6}$, $\zeta(4)=\frac{\pi^4}{90}$ and $\zeta(6)=\frac{\pi^6}{945}$\cite{Euler1735}, we have
\begin{equation*}
4\zeta(2)\zeta(4)=7\zeta(6)
\end{equation*}
and
\begin{equation*}
\zeta(6)=\frac{8}{35}\zeta(2)^3.
\end{equation*}
This simplifies the equation above further, namely,
\begin{equation*}
3\zeta(4,2)=3\zeta(3)^2-4\zeta(6)=3\zeta(3)^2-\frac{32}{35}\zeta(2)^3.
\end{equation*}
Thus, eventually,
\begin{equation*}
\zeta(4,2)=\zeta(3)^2-\frac{32}{105}\zeta(2)^3.
\end{equation*}
\end{proof}

\section{Explicit Drinfeld associators}
We are now in a position to give an explicit description of two Drinfeld associators.
\subsection{The Knizhnik-Zamolodchikov associators}
The Drinfeld associators defined below where first defined by V. Drinfeld \cite{Drinfeld1991}. Further explanations can be found in \cite{Le1996}.
\begin{definition}\cite{Drinfeld1991}
The \textit{Knizhnik-Zamolodchikov (KZ) associator} is given by the following formula. 
\begin{equation}
\PhiKZ(x,y):=1+\sum_{w\in B,\left|w\right|\geq 2}{\frac{(-1)^{n_w}}{(2\pi i)^{\left|w\right|}}\zeta(w)w}
\end{equation}
Here, $\left|w\right|$ denotes the length of the word $w$, $n_w$ the number of $y$'s in $w$ and $\zeta(w)$ the regularized multiple zeta value associated to $w$. 
The \textit{anti-Knizhnik-Zamolodchikov associator} is defined as
\begin{equation}
\PhiAKZ(x,y):=\PhiKZ(-x,-y).
\end{equation}
\end{definition}
\begin{remark}
\cite{Drinfeld1991} Both the KZ associator and the anti-KZ associator are Drinfeld associators. They satisfy equations \eqref{eq:DAss1} to \eqref{eq:DAss3} for the parameter $\mu=1$.
\end{remark}

\begin{remark}
Note that indeed $\PhiAKZ\neq\PhiKZ$ as the coefficients of words of odd length will have opposite signs. For instance, 
\begin{equation*}
0 \neq u_{x^2y}^{\text{KZ}}=-\frac{\zeta(3)}{(2\pi i)^3}\neq +\frac{\zeta(3)}{(2\pi i)^3}=u_{x^2y}^{\overline{\text{KZ}}}.
\end{equation*}
\end{remark}

\begin{remark}
Since $\zeta(x)=\zeta(y)=0$, also $\zeta(x^n)=0$ for all $n$, via
\begin{equation*}
\zeta(x^n)=\zeta(x^{n-1})\zeta(x)-(n-1)\zeta(x^{n})\Leftrightarrow n\zeta(x^n)=0
\end{equation*}
and similarly for $y$. This implies that in $\PhiKZ$ and $\PhiAKZ$ terms containing only $x$'s or only $y$'s do not appear.
\end{remark}

\subsection{The first main Theorem}

Since we have a free and transitive action of $\GRT_1$ on $\DAss$, there exists a unique element $\psi\in \GRT_1$ such that $\psi\cdot\PhiKZ=\PhiAKZ$. The group $\GRT_1$ is pro-unipotent, and thus there is a unique element $g \in \grt_1$ with $\psi=\exp(g)$. Therefore, it makes sense to consider the "square root" of $\psi$, 
\begin{equation}
\psiOH:=\exp(\frac{g}{2})\in \GRT_1.
\end{equation}
It is the element that satisfies the equation
\begin{equation}
(\psiOH\cdot\psiOH)(x,y)=\psi(x,y),
\end{equation}
where the product on the left is again the product in $\GRT_1$. Let us spell out in detail, why the equation above holds for $\psiOH(x,y):=\exp(\frac{1}{2}g(x,y))$, as one has to be careful with the peculiar products and brackets appearing.
\begin{align*}
(\psiOH\cdot\psiOH)(x,y)&=\exp(\frac{1}{2}g(x,y))\cdot\exp(\frac{1}{2}g(x,y))\\
&=\exp\left(\bch(\frac{g}{2},\frac{g}{2})(x,y)\right)\\
&=\exp(\frac{g(x,y)}{2}+\frac{g(x,y)}{2}+\frac{1}{2}\left\{\frac{g}{2},\frac{g}{2}\right\}(x,y)+\dots)\\
&=\exp(g(x,y))=\psi(x,y)
\end{align*}
The Baker-Campbell-Hausdorff element $\bch(-,-)$ is not taken with respect to the ordinary bracket of Lie series, but the Ihara bracket (see equation \eqref{eq:Ihara}). By acting with this new element $\psiOH\in \GRT_1$ on $\PhiKZ$, we find a Drinfeld associator \cite{Rossi2014}, which we'll denote $\PhiOH$, i.e.
\begin{equation}
\PhiOH(x,y):=(\psiOH\cdot\PhiKZ)(x,y).
\end{equation}

In \cite{Drinfeld1991}, it was shown that there must exist an associator $\Phi(x,y)\in \mathbb{K}\left\langle \left\langle x,y \right\rangle\right\rangle$ having only rational coefficients. The natural (and, to the author's knowledge, also unanswered) question that arises here is thus, whether the associator $\PhiOH$ is in fact such an associator. Unfortunately, this does not seem to be the case. Our first main result is the following theorem.

\begin{theorem}\label{thm:OH}
The coefficient in $\PhiOH$ of the word $w=x^2yx^4y$ is
\begin{equation*}
\cfrac{2\zeta(3,5)-7\zeta(3)\zeta(5)}{512\pi^8}.
\end{equation*}
If the conjectured basis $\left\{\zeta(3,5),\zeta(3)\zeta(5),\zeta(3)^2\zeta(2),\zeta(2)^4\right\}$ from conjecture \ref{conjecture} is in fact a $\mathbb{Q}$-basis for multiple zeta values of weight 8, then this coefficient is irrational.
\end{theorem}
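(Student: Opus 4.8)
The plan is to carry out everything modulo the two-sided ideal of $\mathbb{K}\langle\langle x,y\rangle\rangle$ spanned by the words containing at least three letters $y$, exploiting that the target word $x^2yx^4y$ has exactly two letters $y$. For a series $\Phi$ write $\Phi=\sum_{k\ge 0}\Phi^{(k)}$ for its decomposition according to the number of letters $y$. Since $\grt_1$ is concentrated in degrees $\ge 3$, every $\gamma\in\grt_1$ satisfies $\gamma^{(0)}=0$, so the infinitesimal generator of the $\grt_1$-action, $D_\gamma\colon\Phi\mapsto\gamma\Phi+[y,\gamma]\partial_y\Phi$, strictly increases the number of letters $y$, and $D_\gamma^{k}$ increases it by at least $k$. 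The element $\psi=\exp(g)$ acts on $\DAss$ as $\exp(D_g)$ and $\psiOH=\exp(g/2)$ as $\exp(\tfrac12 D_g)$ (integrate the $\grt_1$-action along the one-parameter subgroup $t\mapsto\exp(tg)$), so reducing modulo words with at least three letters $y$ gives
\[
\PhiAKZ\equiv\PhiKZ+D_g\PhiKZ+\tfrac12 D_g^2\PhiKZ,\qquad
\PhiOH\equiv\PhiKZ+\tfrac12 D_g\PhiKZ+\tfrac18 D_g^2\PhiKZ .
\]
(Alternatively one can apply the explicit $\GRT_1$-product formula to $\psi\cdot\PhiKZ$, $\psiOH\cdot\PhiKZ$ and $\psi=\psiOH\cdot\psiOH$ and avoid exponentials; the computation is the same.)

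First I extract $g^{(1)}$. As $D_g^2$ kills the one-$y$ layer, comparing one-$y$ parts in the first congruence gives $g^{(1)}=\PhiAKZ^{(1)}-\PhiKZ^{(1)}$. Using $\PhiAKZ(x,y)=\PhiKZ(-x,-y)$ and Lemma~\ref{lemma:zeta} for the coefficients $\zeta(x^ayx^b)$ of $\PhiKZ$, one checks that the coefficient of $x^ayx^b$ in $g^{(1)}$ vanishes unless $a+b$ is even, so $g^{(1)}$ involves only odd zeta values; in particular its degree-$3$ and degree-$5$ components are $\alpha(\ad x)^2 y$ and $\beta(\ad x)^4 y$ with
\[
\alpha=\frac{2\zeta(3)}{(2\pi i)^3},\qquad \beta=\frac{2\zeta(5)}{(2\pi i)^5}.
\]

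Next I compute the two-$y$ layer of $\PhiOH$. A direct computation gives $[D_g^2\PhiKZ]^{(2)}=D_g^{\mathrm{lin}}(g^{(1)})$, where $D_g^{\mathrm{lin}}(h):=g^{(1)}h+[y,g^{(1)}]\partial_y h$ is the part of $D_g$ raising the number of letters $y$ by exactly one. Eliminating $[D_g\PhiKZ]^{(2)}$ between the two congruences then yields
\[
\PhiOH^{(2)}=\tfrac12\bigl(\PhiKZ^{(2)}+\PhiAKZ^{(2)}\bigr)-\tfrac18\,D_g^{\mathrm{lin}}(g^{(1)}).
\]
Reading off the coefficient of $w=x^2yx^4y$: the word $w$ is admissible with $n_w=2$ and $|w|=8$ even, so $\PhiKZ^{(2)}$ and $\PhiAKZ^{(2)}$ — hence their average — have $w$-coefficient $\zeta(3,5)/(2\pi i)^8$. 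In $D_g^{\mathrm{lin}}(g^{(1)})$ only the degree-$3$ and degree-$5$ pieces of $g^{(1)}$ can combine to degree $8$; one checks the derivation term $[y,g^{(1)}]\partial_y g^{(1)}$ contributes nothing to $w$, while the product term $g^{(1)}g^{(1)}$ contributes $\alpha\beta$ from the splitting $w=(x^2y)(x^4y)$ and $6\alpha\beta$ from $w=(x^2yx^2)(x^2y)$, for a total of $7\alpha\beta=28\zeta(3)\zeta(5)/(2\pi i)^8$. Hence the coefficient of $x^2yx^4y$ in $\PhiOH$ is
\[
\frac{\zeta(3,5)}{(2\pi i)^8}-\frac18\cdot\frac{28\zeta(3)\zeta(5)}{(2\pi i)^8}
=\frac{2\zeta(3,5)-7\zeta(3)\zeta(5)}{2(2\pi i)^8}
=\frac{2\zeta(3,5)-7\zeta(3)\zeta(5)}{512\pi^8},
\]
since $(2\pi i)^8=256\pi^8$.

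For the irrationality, note $\pi^8=1296\,\zeta(2)^4$ (as $\zeta(2)=\pi^2/6$), so the coefficient is a fixed nonzero rational multiple of $(2\zeta(3,5)-7\zeta(3)\zeta(5))/\zeta(2)^4$; were it rational, $2\zeta(3,5)-7\zeta(3)\zeta(5)$ would be a rational multiple of $\zeta(2)^4$, a nontrivial $\mathbb{Q}$-linear relation among $\zeta(3,5)$, $\zeta(3)\zeta(5)$, $\zeta(2)^4$, contradicting their linear independence under Conjecture~\ref{conjecture}. The structural steps are routine; the main obstacle is the combinatorics of the last step — verifying that the derivation term of $D_g^{\mathrm{lin}}(g^{(1)})$ drops out for $x^2yx^4y$ (the potential contributions either yield words ending in $x$ or force the block $yx^4y$, which has coefficient $0$ in $[y,g^{(1)}]$) and that the remaining product term carries the factor $7$ rather than, say, $2$, which is exactly what produces $-7\zeta(3)\zeta(5)$ in the numerator.
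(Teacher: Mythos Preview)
Your proof is correct. Both you and the paper work modulo words with three or more $y$'s and end up evaluating the same combinatorial data, but the organization differs in a useful way. The paper proceeds through the explicit $\GRT_1$-product formula: it first solves $\psi\cdot\PhiKZ=\PhiAKZ$ for the degree-one and degree-two coefficients $c_w$ of $\psi$, then solves $\psiOH\cdot\psiOH=\psi$ for the coefficients $d_w$ of $\psiOH$, and finally computes $f_w$ from $\psiOH\cdot\PhiKZ$; the degree-two pieces $c_w,d_w$ are carried along and eventually cancel. You instead pass to the infinitesimal picture, write the action of $\exp(tg)$ as $\exp(tD_g)$, expand both $\PhiAKZ$ and $\PhiOH$ to second order in $D_g$, and eliminate the unknown $[D_g\PhiKZ]^{(2)}$ (equivalently, $g^{(2)}$) by a linear combination. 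This yields directly the closed formula
\[
\PhiOH^{(2)}=\tfrac12\bigl(\PhiKZ^{(2)}+\PhiAKZ^{(2)}\bigr)-\tfrac18\,D_g^{\mathrm{lin}}(g^{(1)}),
\]
so you never have to compute degree-two coefficients of $\psi$ or $\psiOH$. The trade-off is that the paper's route generalizes mechanically to any word once the product formulas \eqref{eq:deg1}--\eqref{eq:deg2} are written down, while your elimination trick is tailored to the comparison of $\PhiOH$ with the average of $\PhiKZ$ and $\PhiAKZ$. The endgame---checking that only the splittings $(x^2y)(x^4y)$ and $(x^2yx^2)(x^2y)$ survive, that the derivation term $[y,g^{(1)}]\partial_y g^{(1)}$ contributes zero to $x^2yx^4y$, and the irrationality argument via $\pi^8=1296\,\zeta(2)^4$---matches the paper exactly.
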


\subsection{Towards a proof of Theorem \ref{thm:OH}}

We start by considering two general group-like formal power series $\Phi,\Phi'\in \mathbb{K}\left\langle \left\langle x,y \right\rangle\right\rangle$ satisfying the equations from definition \ref{def: DAss} for some $\mu\in \mathbb{K}$. We want to understand the first terms of their product (as if they were either two elements of $\GRT_1$ or $\Phi\in \GRT_1$ and $\Phi'\in \DAss$)
\begin{equation}\label{eq:product}
(\Phi\cdot\Phi')(x,y)=\Phi(x,y)\Phi'(x,\Phi(x,y)^{-1}y\Phi(x,y)).
\end{equation}

\begin{lemma}
A group-like element $\Phi(x,y)$ satisfying the equations \eqref{eq:DAss1} to \eqref{eq:DAss3}, will not contain any linear terms in $x$ and $y$. 
\end{lemma}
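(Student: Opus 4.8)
The plan is to isolate the degree-one component of $\Phi$ and substitute it into the three defining equations. Write $\Phi(x,y)=1+\Phi_1(x,y)+\Phi_{\geq 2}(x,y)$, where $\Phi_1(x,y)=ax+by$ with $a,b\in\mathbb{K}$ is the linear part; the constant term is forced to be $1$ since \eqref{eq:DAss1} requires $\Phi$ to be invertible, and then group-likeness gives $c=c^2$ in degree $0$. For two series with constant term $1$ the degree-one part of a product is the sum of the degree-one parts, and substituting homogeneous degree-one elements (such as the $t_{ij}$) into $\Phi$ contributes $\Phi_1$ of those elements in degree one and nothing else; so each of \eqref{eq:DAss1}--\eqref{eq:DAss3} reduces to a linear identity in $a,b$.

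First I would read off \eqref{eq:DAss1}: since $\Phi(y,x)^{-1}=1-\Phi_1(y,x)+\cdots$, comparing linear parts gives $ax+by=-(ay+bx)$, i.e.\ $a=-b$. Next, \eqref{eq:DAss2} contributes nothing at this order: its degree-one part is $\tfrac{\mu}{2}(x+y+z)+(a+b)(x+y+z)$, which vanishes because $x+y+z=0$. So the decisive constraint must come from the pentagon \eqref{eq:DAss3}. Taking degree-one parts, the left-hand side becomes
\begin{align*}
&a\,t_{12}+b(t_{23}+t_{24})+a(t_{13}+t_{23})+b\,t_{34}\\
&\qquad = a\,t_{12}+a\,t_{13}+(a+b)\,t_{23}+b\,t_{24}+b\,t_{34},
\end{align*}
while the right-hand side becomes
\begin{align*}
&a\,t_{23}+b\,t_{34}+a(t_{12}+t_{13})+b(t_{24}+t_{34})+a\,t_{12}+b\,t_{23}\\
&\qquad = 2a\,t_{12}+a\,t_{13}+(a+b)\,t_{23}+b\,t_{24}+2b\,t_{34}.
\end{align*}

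The key observation is that all defining relations of $\frakt_4$ are commutators and hence lie in degree $\geq 2$, so the degree-one part of $U(\frakt_4)$ is freely spanned over $\mathbb{K}$ by the six generators $t_{12},t_{13},t_{14},t_{23},t_{24},t_{34}$. Comparing the coefficient of $t_{12}$ in the two displays forces $a=2a$, and comparing the coefficient of $t_{34}$ forces $b=2b$; hence $a=b=0$ and $\Phi_1=0$. The computation itself is short; the only point requiring a moment's care is this linear independence of the $t_{ij}$ in degree one, together with the bookkeeping that substitution of degree-one arguments is compatible with the grading. Note also that \eqref{eq:DAss1} alone only yields $a=-b$, so it is genuinely the pentagon that annihilates the linear term.
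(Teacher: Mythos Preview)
Your proof is correct and follows essentially the same route as the paper: extract the degree-one part and use the pentagon equation \eqref{eq:DAss3} in $\frakt_4$ to force $a=b=0$, relying on the linear independence of the generators $t_{ij}$ in degree one. The only cosmetic difference is that the paper first substitutes $b=-a$ from \eqref{eq:DAss1} before invoking the pentagon, whereas you observe (correctly) that the pentagon alone already kills both coefficients.
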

\begin{proof}
Assume on the contrary that
\begin{equation*}
\Phi(x,y)=1+ax+by+(\dots)
\end{equation*}
where $a,b\neq 0$ and $(\dots)$ denotes higher order terms. The antisymmetry relation implies $a=-b$, via
\begin{align*}
\Phi(x,y)\Phi(y,x)&=1\\
\Leftrightarrow(1+ax+by+(\dots))\dot(1+ay+bx+(\dots))&=1\\
\Leftrightarrow(1+(a+b)(x+y)+(\dots))&=1\\
\Leftrightarrow a+b=0.
\end{align*}
Inserting this into the third equation \eqref{eq:DAss3}, we get to first order,
\begin{align*}
&1+a(t_{12}-t_{23}-t_{24})+a(t_{13}+t_{23}-t_{34})\\
&=1+a(t_{23}-t_{34})+a(t_{12}+t_{13}-t_{24}-t_{34})+a(t_{12}-t_{23})\\
&\Leftrightarrow t_{12}-t_{23}-t_{24}+t_{13}+t_{23}-t_{34}\\
&=t_{23}-t_{34}+t_{12}+t_{13}-t_{24}-t_{34}+t_{12}-t_{23}\\
&\Leftrightarrow 0=t_{12}-t_{34}.
\end{align*}
This is a contradiction, and hence $\Phi$ cannot contain any terms of first order.
\end{proof}

We may therefore represent the formal power series by
\begin{align*}
\Phi(x,y)&=1+\sum_{w\in B,\left|w\right|\geq 2}{a_{w}w}  \\
\text{ and }\Phi'(x,y)&=1+\sum_{w\in B,\left|w\right|\geq 2}{b_{w}w}.
\end{align*}
For our purposes, we may furthermore assume that coefficients of words containing only $x$'s or only $y$'s will equal zero in both $\Phi$ and $\Phi'$.

\begin{definition}
The \textit{degree in }$y$ (in $x$) of a word $w$ in $B$ is the number of $y$'s (of $x$'s) in $w$. We will mostly look at the degree in $y$, and in this case simply refer to it as \textit{the degree}.
\end{definition}

\begin{remark}
The anti-symmetry equation $\Phi(x,y)^{-1}=\Phi(y,x)$, implies that
\begin{align*}
a_{w(x,y)}&=-a_{w(y,x)}\\
b_{w(x,y)}&=-b_{w(y,x)}
\end{align*}
for all $w$ of degree one in either $x$ or $y$.
\end{remark}

We now want to find the coefficients $p_w$ up to degree 2 of the product 
\begin{equation}
(\Phi\cdot\Phi')(x,y):=1+\sum\limits_{w\in B,\left|w\right|\geq 2}{p_{w}w}
\end{equation}
This is done by comparing the coefficients of the words $x^ayx^b$ (for degree 1), $x^ayx^byx^c$ (for degree 2) for $a,b,c\in \mathbb{N}_0$ in the equation
\begin{equation}
\Phi(x,y)\Phi'(x,\Phi(x,y)^{-1}y\Phi(x,y))=1+\sum\limits_{w\in B,\left|w\right|\geq 2}{p_{w}w}.
\end{equation}

\begin{degree1}
Let thus $w=x^ayx^b$ where $a,b\in \mathbb{N}_{0}$. We find
\begin{equation}\label{eq:deg1}
\begin{aligned}
p_w&=a_w+b_w\\
\text{and } p_{w(x,y)}&=-p_{w(y,x)}.
\end{aligned}
\end{equation}
\end{degree1}

\begin{degree2}
Let $w=x^ayx^byx^c$ with $a,b,c\in \mathbb{N}_{0}$. Then
\begin{equation}
\begin{aligned}\label{eq:deg2}
p_{w}=&a_{w}\cdot 1+ 1 \cdot b_{w}\\
&+\sum_{j=0}^{b}{a_{x^ayx^j}b_{x^{b-j}yx^c}}\\
&+\sum_{j=0}^{a}{b_{x^jyx^c} (-a_{x^{a-j}yx^b})}\\
&+\sum_{j=0}^{c}{b_{x^ayx^j} a_{x^byx^{c-j}}}.
\end{aligned}
\end{equation}
\end{degree2}

Using the formulas above we may solve the equation
\begin{equation}
(\psi\cdot\PhiKZ)(x,y)=\PhiAKZ(x,y)
\end{equation}
for the coefficients of $\psi\in \GRT_1$ up to degree 2. For this, let 
\begin{equation}
\psi(x,y):=1+\sum\limits_{w\in B,\left|w\right|\geq2}{c_{w}w}
\end{equation}
and write $u_w$ for the coefficients in $\PhiKZ$, i.e.
\begin{equation}
\PhiKZ(x,y):=1+\sum\limits_{w\in B,\left|w\right|\geq2}{u_{w}w}.
\end{equation}

\begin{degree1}
Let $w=x^ayx^b$ with $a,b\in \mathbb{N}_{0}$. Then from equation \eqref{eq:deg1}, we find,
\begin{equation}
c_{w}+u_{w} =
\left\{
	\begin{array}{ll}
		-u_{w}  & \mbox{if } \left|w\right| \text{ odd} \\
		u_{w} & \mbox{if } \left|w\right| \text{ even}
	\end{array}
\right.
\end{equation}
This implies,
\begin{equation}
c_{w} =
\left\{
	\begin{array}{ll}
		-2u_{w}  & \mbox{if } \left|w\right| \text{ odd} \\
		0 & \mbox{if } \left|w\right| \text{ even.}
	\end{array}
\right.
\end{equation}
Moreover, as for all elements of $\GRT_1$ and $\DAss$, $c_{w(x,y)}=-c_{w(y,x)}$, whenever $w$ is of degree 1.
\end{degree1}

\begin{degree2}
Let $w=x^ayx^byx^c$ with $a,b,c\in \mathbb{N}_{0}$. Equation \eqref{eq:deg2} gives,
\begin{equation}\label{eq:cw2}
\begin{aligned}
c_{w}=&-u_{w}+(-1)^{\left|w\right|}u_{w}-\sum_{j=0}^{b}{c_{x^ayx^j}u_{x^{b-j}yx^c}}\\
&-\sum_{j=0}^{a}{u_{x^jyx^c} (-c_{x^{a-j}yx^b})}-\sum_{j=0}^{c}{u_{x^ayx^j} c_{x^byx^{c-j}}}.
\end{aligned}
\end{equation}
We may simplify the expression for $c_w$ by introducing the following notation. For a word $v=x^pyx^q$ of degree 1, define
\begin{equation*}
\delta_{pq} =
\left\{
	\begin{array}{ll}
		1  & \mbox{if } \left|v\right|=p+q+1 \text{ odd} \\
		0 & \mbox{if } \left|v\right|=p+q+1 \text{ even}
	\end{array}
\right.
\end{equation*}
 Then $c_v=\delta_{pq}\cdot (-2u_v)$. This way we may rewrite equation \eqref{eq:cw2} as,
\begin{equation}\label{eq:cw}
\begin{aligned}
c_{w}=&-u_{w}+(-1)^{\left|w\right|}u_{w}+2\sum_{j=0}^{b}{\delta_{aj} u_{x^ayx^j}u_{x^{b-j}yx^c}}\\
&+2\sum_{j=0}^{a}{\delta_{(a-j)b} u_{x^jyx^c} (-u_{x^{a-j}yx^b})}+2\sum_{j=0}^{c}{\delta_{b(c-j)}u_{x^ayx^j} u_{x^byx^{c-j}}}.
\end{aligned}
\end{equation}
This expression will turn out to be useful later on. Note that in the case where there should be no confusion, we might write $\delta_v$ instead of $\delta_{pq}$.
\end{degree2}


Our next objective will be to find the coefficients up to degree 2 of $\psiOH$. We write $d_w$ for the coefficients of $\psiOH$, i.e.
\begin{equation}
\psiOH(x,y):=1+\sum\limits_{w\in B,\left|w\right|\geq 2}{d_{w}w}.
\end{equation}
The defining equation for $\psiOH$ is
\begin{equation}
(\psiOH\cdot\psiOH)(x,y)=\psi(x,y).
\end{equation}

\begin{degree1}
Let $w=x^ayx^b$ with $a,b \in \mathbb{N}_{0}$. From the above (equation \eqref{eq:deg1}) we get,
\begin{equation}\label{eq:dw1}
\begin{aligned}
d_{w}+d_{w}&=c_{w}\\
\Leftrightarrow d_{w}&=\frac{c_{w}}{2}=\left\{
	\begin{array}{ll}
		\frac{-2u_w}{2}=-u_w  & \mbox{if } \left|w\right| \text{ odd} \\
		0 & \mbox{if } \left|w\right| \text{ even.}
	\end{array}
\right.
\end{aligned}
\end{equation}
Additionally, $d_{w(x,y)}=-d_{w(y,x)}$, for a word $w$ of degree 1.
\end{degree1}

\begin{degree2}
Consider $w=x^ayx^byx^c$ with $a,b,c \in \mathbb{N}_{0}$. Then, by equation \eqref{eq:deg2},
\begin{align*}
d_{w}=&\frac{1}{2}\Biggl(\Biggr.c_{w}-\sum_{j=0}^{b}{d_{x^ayx^j}d_{x^{b-j}yx^c}}\\
&-\sum_{j=0}^{a}{d_{x^jyx^c} (-d_{x^{a-j}yx^b})}-\sum_{j=0}^{c}{d_{x^ayx^j} d_{x^byx^{c-j}}}\Biggl.\Biggr).
\end{align*}
Again, we may rewrite this solely in terms of $c_w$ and $u_v$'s for $v$'s of degree 1. Namely, if $v=x^pyx^q$, then $d_v=-\delta_{pq}\cdot u_v$ (equation \eqref{eq:dw1}). Therefore
\begin{align*}
d_{w}=&\frac{1}{2}\Bigg(c_{w}-\sum_{j=0}^{b}{\delta_{aj}\delta_{(b-j)c}u_{x^ayx^j}u_{x^{b-j}yx^c}}\\
&-\sum_{j=0}^{a}{\delta_{jc}\delta_{(a-j)b} u_{x^jyx^c} (-u_{x^{a-j}yx^b})}-\sum_{j=0}^{c}{\delta_{aj}\delta_{b(c-j)} u_{x^ayx^j} u_{x^byx^{c-j}}}\Bigg).
\end{align*}
Now, to make matters worse, we have to consider the cases where $\left|w\right|$ is odd and where it is even separately. If $\left|w\right|$ is odd, then every $u_v u_v'$ in the sums above, with $v,v'$ of degree 1, will be such that one of $\left|v\right|$ and $\left|v'\right|$ is even and the other is odd. Therefore, products of $\delta$'s appearing in the sums above will give 0, and hence,
\begin{equation}\label{eq:dw2odd}
d_w=\frac{c_w}{2}.
\end{equation}

If, on the other hand $\left|w\right|$ is even, all $u_v u_v'$ in the sum will correspond to words $v,v'$ with lengths of equal parity. In this case, the products $\delta_v\delta_{v'}=\delta_v=\delta_{v'}$, and we may rewrite $d_w$ as,

\begin{align*}
d_{w}=&\frac{1}{2}\Bigg(c_{w}-\sum_{j=0}^{b}{\delta_{aj}u_{x^ayx^j}u_{x^{b-j}yx^c}}\\
&-\sum_{j=0}^{a}{\delta_{(a-j)b} u_{x^jyx^c} (-u_{x^{a-j}yx^b})}-\sum_{j=0}^{c}{\delta_{b(c-j)} u_{x^ayx^j} u_{x^byx^{c-j}}}\Bigg).
\end{align*}
Next, we replace $c_w$ with the expression found in equation \eqref{eq:cw} to find,
\begin{align*}
d_w=&\frac{1}{2}\Bigg(-u_{w}+(-1)^{\left|w\right|}u_{w}+2\underbrace{\sum_{j=0}^{b}{\delta_{aj} u_{x^ayx^j}u_{x^{b-j}yx^c}}}_{(\ast)}\\
&+2\underbrace{\sum_{j=0}^{a}{\delta_{(a-j)b} u_{x^jyx^c} (-u_{x^{a-j}yx^b})}}_{(\ast\ast)}+2\underbrace{\sum_{j=0}^{c}{\delta_{b(c-j)}u_{x^ayx^j} u_{x^byx^{c-j}}}}_{(\ast\ast\ast)}\\
&-\underbrace{\sum_{j=0}^{b}{\delta_{aj}u_{x^ayx^j}u_{x^{b-j}yx^c}}}_{(\ast)}-\underbrace{\sum_{j=0}^{a}{\delta_{(a-j)b} u_{x^jyx^c} (-u_{x^{a-j}yx^b})}}_{(\ast\ast)}-\underbrace{\sum_{j=0}^{c}{\delta_{b(c-j)} u_{x^ayx^j} u_{x^byx^{c-j}}}}_{(\ast\ast\ast)}\Bigg).
\end{align*}
We have marked equal terms to clarify why the expression simplifies to,
\begin{equation}\label{eq:dw}
\begin{aligned}
d_w=&\frac{1}{2}\Bigg(-u_{w}+(-1)^{\left|w\right|}u_{w}+\sum_{j=0}^{b}{\delta_{aj} u_{x^ayx^j}u_{x^{b-j}yx^c}}\\
&+\sum_{j=0}^{a}{\delta_{(a-j)b} u_{x^jyx^c} (-u_{x^{a-j}yx^b})}+\sum_{j=0}^{c}{\delta_{b(c-j)}u_{x^ayx^j} u_{x^byx^{c-j}}}\Bigg).
\end{aligned}
\end{equation}
\end{degree2}


The coefficients of the associator $\PhiOH$, denoted $f_w$, are found by looking at the equation
\begin{equation}
(\psiOH\cdot\PhiKZ)(x,y)=\PhiOH(x,y).
\end{equation}

\begin{degree1}
For $w=x^ayx^b$, $a,b\in \mathbb{N}_{0}$, we have by equation \eqref{eq:deg1},
\begin{equation*}
f_{w}=d_{w}+u_{w}.
\end{equation*}
Together with (coming from equation \eqref{eq:dw1})
\begin{equation*}
d_w=\frac{c_w}{2}=\left\{
	\begin{array}{ll}
		-u_w  & \mbox{if } \left|w\right| \text{ odd} \\
		0 & \mbox{if } \left|w\right| \text{ even,}
	\end{array}
\right.
\end{equation*}
this implies
\begin{equation*}
f_w=\left\{
	\begin{array}{ll}
		-u_{w}+u_w=0  & \mbox{if } \left|w\right| \text{ odd} \\
		u_w & \mbox{if } \left|w\right| \text{ even.}
	\end{array}
\right.
\end{equation*}

Hence, $\PhiOH$ is in fact a new Drinfeld associator, i.e. it is not equal to $\PhiKZ$ or $\PhiAKZ$.

\begin{remark}
For $w=x^pyx^q$ with $\left|w\right|=p+q+1=2n$ even, the coefficient $u_w$ has a particularly nice form.
\begin{align*}
u_w&=\frac{-1}{(2\pi i)^{2n}}\zeta(w)=\frac{-1}{(2\pi i)^{2n}}(-1)^q\frac{(p+q)!}{p!q!}\zeta(p+q+1)\\
&=\frac{(-1)^{q+1}(p+q)!}{(2\pi i)^{2n}p!q!}\zeta(2n).
\end{align*}
Note that we used Lemma \ref{lemma:zeta} to regularize $\zeta(w)$. It is known, that $\zeta(2n)=(-1)^{n-1}\frac{(2\pi)^{2n}}{2(2n)!}\text{B}_{2n}$, where $\text{B}_{2n}\in \mathbb{Q}$ is the $2n$-th Bernoulli-number \cite{Broadhurst}. More important than the explicit form of $\zeta(2n)$ is the fact that $\zeta(2n)=q\cdot \pi^{2n}$, for some $q\in \mathbb{Q}$. This implies that $u_w$ is a rational number for all words $w$ of degree 1.
\end{remark}

\end{degree1}

\begin{degree2}
Let $w=x^ayx^byx^c$ with $a,b,c\in \mathbb{N}_{0}$. Then following equation \eqref{eq:deg2},
\begin{align*}
f_{w}=&d_{w}\cdot 1+ 1 \cdot u_{w}\\
&+\sum_{j=0}^{b}{d_{x^ayx^j}u_{x^{b-j}yx^c}}\\
&+\sum_{j=0}^{a}{u_{x^jyx^c} (-d_{x^{a-j}yx^b})}\\
&+\sum_{j=0}^{c}{u_{x^ayx^j} d_{x^byx^{c-j}}}.
\end{align*}

\begin{case1}
Assume that $\left|w\right|$ is odd. Then $d_w=\cfrac{c_w}{2}$ (equation \eqref{eq:dw2odd}) and
\begin{align*}
f_{w}=&\frac{c_w}{2}+ u_{w}\\
&-\sum_{j=0}^{b}{\delta_{aj} u_{x^ayx^j}u_{x^{b-j}yx^c}}\\
&-\sum_{j=0}^{a}{\delta_{(a-j)b} u_{x^jyx^c} (-u_{x^{a-j}yx^b})}\\
&-\sum_{j=0}^{c}{\delta_{b(c-j)} u_{x^ayx^j} u_{x^byx^{c-j}}}.
\end{align*}
We replace $c_w$ with the expression from equation \eqref{eq:cw} to find,
\begin{align*}
f_w=&\frac{1}{2}\Bigg(-u_{w}+(-1)^{\left|w\right|}u_{w}+2\sum_{j=0}^{b}{\delta_{aj} u_{x^ayx^j}u_{x^{b-j}yx^c}}\\
&+2\sum_{j=0}^{a}{\delta_{(a-j)b} u_{x^jyx^c} (-u_{x^{a-j}yx^b})}+2\sum_{j=0}^{c}{\delta_{b(c-j)}u_{x^ayx^j} u_{x^byx^{c-j}}}\Bigg)+ u_{w}\\
&-\sum_{j=0}^{b}{\delta_{aj} u_{x^ayx^j}u_{x^{b-j}yx^c}}-\sum_{j=0}^{a}{\delta_{(a-j)b} u_{x^jyx^c} (-u_{x^{a-j}yx^b})}-\sum_{j=0}^{c}{\delta_{b(c-j)} u_{x^ayx^j} u_{x^byx^{c-j}}}.
\end{align*}
Reordering the sums (and keeping in mind that $\left|w\right|$ is odd) gives,
\begin{align*}
f_w=&\frac{1}{2}\left(-u_{w}-u_{w}\right)+u_w+\frac{1}{2}2\underbrace{\sum_{j=0}^{b}{\delta_{aj} u_{x^ayx^j}u_{x^{b-j}yx^c}}}_{(\ast)}\\
&+\frac{1}{2}2\underbrace{\sum_{j=0}^{a}{\delta_{(a-j)b} u_{x^jyx^c} (-u_{x^{a-j}yx^b})}}_{(\ast\ast)}+\frac{1}{2}2\underbrace{\sum_{j=0}^{c}{\delta_{b(c-j)}u_{x^ayx^j} u_{x^byx^{c-j}}}}_{(\ast\ast\ast)}\\
&-\underbrace{\sum_{j=0}^{b}{\delta_{aj} u_{x^ayx^j}u_{x^{b-j}yx^c}}}_{(\ast)}-\underbrace{\sum_{j=0}^{a}{\delta_{(a-j)b} u_{x^jyx^c} (-u_{x^{a-j}yx^b})}}_{(\ast\ast)}-\underbrace{\sum_{j=0}^{c}{\delta_{b(c-j)}u_{x^ayx^j} u_{x^byx^{c-j}}}}_{(\ast\ast\ast)}\\
&=0,
\end{align*}
as the equally marked terms all cancel each other. We have found
\begin{equation*}
f_w=0.
\end{equation*}

\end{case1}

\begin{case2}
Consider now $w$ of even length. In this case,
\begin{align*}
f_{w}=&d_w + u_{w}-\sum_{j=0}^{b}{\delta_{aj} u_{x^ayx^j}u_{x^{b-j}yx^c}}\\
&-\sum_{j=0}^{a}{\delta_{(a-j)b} u_{x^jyx^c} (-u_{x^{a-j}yx^b})}-\sum_{j=0}^{c}{\delta_{b(c-j)} u_{x^ayx^j} u_{x^byx^{c-j}}}.
\end{align*}

Replacing $d_w$ with the expression from equation \eqref{eq:dw} yields,
\begin{align*}
f_{w}=&\frac{1}{2}\Bigg(-u_{w}+(-1)^{\left|w\right|}u_{w}+\underbrace{\sum_{j=0}^{b}{\delta_{aj} u_{x^ayx^j}u_{x^{b-j}yx^c}}}_{(\ast)}\\
&+\underbrace{\sum_{j=0}^{a}{\delta_{(a-j)b} u_{x^jyx^c} (-u_{x^{a-j}yx^b})}}_{(\ast\ast)}+\underbrace{\sum_{j=0}^{c}{\delta_{b(c-j)}u_{x^ayx^j} u_{x^byx^{c-j}}}}_{(\ast\ast\ast)}\Bigg)\\
&+ u_{w}-\underbrace{\sum_{j=0}^{b}{\delta_{aj} u_{x^ayx^j}u_{x^{b-j}yx^c}}}_{(\ast)}\\
&-\underbrace{\sum_{j=0}^{a}{\delta_{(a-j)b} u_{x^jyx^c} (-u_{x^{a-j}yx^b})}}_{(\ast\ast)}-\underbrace{\sum_{j=0}^{c}{\delta_{b(c-j)} u_{x^ayx^j} u_{x^byx^{c-j}}}}_{(\ast\ast\ast)}.
\end{align*}
We have marked equal terms in the equation above and find,
\begin{align*}
f_{w}=&\frac{1}{2}\left(-u_{w}+u_{w}\right)+ u_{w}\\
&-\frac{1}{2}\Bigg(\sum_{j=0}^{b}{\delta_{aj} u_{x^ayx^j}u_{x^{b-j}yx^c}}\\
&+\sum_{j=0}^{a}{\delta_{(a-j)b} u_{x^jyx^c} (-u_{x^{a-j}yx^b})}\\
&+\sum_{j=0}^{c}{\delta_{b(c-j)} u_{x^ayx^j} u_{x^byx^{c-j}}}\Bigg),
\end{align*}
which implies
\begin{equation*}
f_w=u_w-\frac{1}{2}\Bigg(\sum_{j=0}^{b}{\delta_{aj} u_{x^ayx^j}u_{x^{b-j}yx^c}}+\sum_{j=0}^{a}{\delta_{(a-j)b} u_{x^jyx^c} (-u_{x^{a-j}yx^b})}+\sum_{j=0}^{c}{\delta_{b(c-j)} u_{x^ayx^j} u_{x^byx^{c-j}}}\Bigg).
\end{equation*}
\end{case2}
\end{degree2}

\subsection{Proof of Theorem \ref{thm:OH}}
Finally, we are in a position to prove our main theorem.
\begin{proof}[Proof of Theorem \ref{thm:OH}]
The word $w=x^2yx^4y$ is of even length. Thus the coefficient $f_w$ is given by the following formula.

\begin{align*}
f_w=&u_{w}-\frac{1}{2}\Bigg(\sum_{j=0}^{4}{\delta_{2j} u_{x^2yx^j}u_{x^{4-j}y}}+\sum_{j=0}^{2}{\delta_{(2-j)4} u_{x^jy} (-u_{x^{2-j}yx^4})}\\
&+\sum_{j=0}^{0}{\delta_{4(0-j)} u_{x^2yx^j} u_{x^4yx^{0-j}}}\Bigg)\\
=&u_{w}-\frac{1}{2}\big(\delta_{20} u_{x^2y}u_{x^{4}y}+\underbrace{\delta_{21}}_{=0} u_{x^2yx}u_{x^{3}y}+\delta_{22} u_{x^2yx^2}u_{x^{2}y}+\underbrace{\delta_{23}}_{=0} u_{x^2yx^3}u_{xy}+\delta_{24} u_{x^2yx^4}\underbrace{u_{y}}_{=0}\\
&+\delta_{24} \underbrace{u_{y}}_{=0} (-u_{x^{2}yx^4})+\underbrace{\delta_{14}}_{=0} u_{xy} (-u_{xyx^4})+\delta_{04} u_{x^2y} (-u_{yx^4})+\delta_{40} u_{x^2y} u_{x^4y}\big).
\end{align*}
The marked terms equal 0. The expression simplifies to,
\begin{equation*}
f_w=u_{w}-\frac{1}{2}u_{x^2y}\big(2u_{x^{4}y}+u_{x^2yx^2}-u_{yx^4}\big).
\end{equation*}
Now,
\begin{align*}
u_{x^2yx^4y}&=\frac{1}{(2\pi i)^8}\zeta(3,5)\\
u_{x^2y}&=\frac{-1}{(2\pi i)^3}\zeta(3)\\
u_{x^{4}y}&=\frac{-1}{(2\pi i)^5}\zeta(5)\\
u_{x^2yx^2}&=\frac{-1}{(2\pi i)^3}\zeta(x^2yx^2)=\frac{-1}{(2\pi i)^5}(-1)^2{4\choose 2}\zeta(5)=\frac{-6}{(2\pi i)^5}\zeta(5)\\
u_{yx^4}&=\frac{-1}{(2\pi i)^5}\zeta(yx^4)=\frac{-1}{(2\pi i)^5}(-1)^0{4\choose 4}\zeta(5)=\frac{-1}{(2\pi i)^5}\zeta(5).
\end{align*}
The result follows via,
\begin{align*}
f_{w}&=u_{w}-\frac{1}{2}u_{x^2y}\big(2u_{x^{4}y}+u_{x^2yx^2}-u_{yx^4}\big)\\
&=\frac{\zeta(3,5)}{(2\pi i)^8}-\frac{1}{2}\frac{(-\zeta(3))}{(2\pi i)^3}\bigg(-2\frac{\zeta(5)}{(2\pi i)^5}-6\frac{\zeta(5)}{(2\pi i)^5}+\frac{\zeta(5)}{(2\pi i)^5}\bigg)\\
&=\frac{1}{(2\pi i)^8}\big(\zeta(3,5)-\frac{7}{2}\zeta(3)\zeta(5)\big)\\
&=\frac{1}{2(2\pi i)^8}\big(2\zeta(3,5)-7\zeta(3)\zeta(5)\big)\\
&=\cfrac{2\zeta(3,5)-7\zeta(3)\zeta(5)}{512\pi^8}.
\end{align*}

As for the irrationality of $f_w$, first note that assuming the conjectured basis to be in fact a basis over $\mathbb{Q}$ implies: If 
\begin{equation*}
q_1\zeta(3,5)+q_2\zeta(3)\zeta(5)+q_3\zeta(3)^2\zeta(2)+q_4\zeta(2)^4=0,
\end{equation*}
for $q_1,q_2,q_3,q_4\in \mathbb{Q}$, then $q_1=q_2=q_3=q_4=0$. Now, assume on the contrary that $f_w=q\in \mathbb{Q}$. We find,
\begin{align*}
&\cfrac{2\zeta(3,5)-7\zeta(3)\zeta(5)}{512\pi^8}=q\\
\Leftrightarrow& 2\zeta(3,5)-7\zeta(3)\zeta(5)-512\pi^8 q=0.
\end{align*}
Since $\zeta(2)=\frac{\pi^2}{6}$, we have $\pi^8=6^4\zeta(2)^4=1296\zeta(2)^4$. Hence,
\begin{equation*}
2\zeta(3,5)-7\zeta(3)\zeta(5)-512\cdot q\cdot 1296\zeta(2)^4=0.
\end{equation*}
But this contradicts the statement above . Therefore, if the conjecture is true, the coefficient has to be irrational.
\end{proof}

\section{The Alekseev-Torossian associator}
There is an alternative and very useful description of  $\psi\in \GRT_1$  which can be found in \cite{Rossi2014}, namely,
\begin{equation}
\psi(x,y)=\mathcal{T}\exp\left(\int\limits^{1}_{0}{x(s)ds}\right).
\end{equation}
Here
\begin{equation}
x(s):=\sum\limits_{j=1}^{\infty}x_{2j+1}(s(s-1))^{2j}\in \grt_1
\end{equation}
for elements $x_{2j+1}\in \grt_1$ indexed by the corresponding degree in the grading of $\hat{\mathbb{F}}_{\Lie}(x,y)\supset\grt_1$. This means that $x_{2j+1}$ will be a linear combination of Lie words in $x$ and $y$ having $2j$ brackets. They are determined by the equation $\psi\cdot\PhiKZ=\PhiAKZ$.
The path-ordered exponential 
\begin{equation*}
\mathcal{T}\exp\left(\int\limits^{1}_{0}{x(s)ds}\right)
\end{equation*}
is defined as
\begin{equation}
\mathcal{T}\exp\left(\int\limits^{1}_{0}{x(s)ds}\right)=1+\int\limits^{1}_{0}{x(s)ds}+\int\limits^{1}_{0}{x(s_1)\int\limits^{s_1}_{0}{x(s_2)ds_2}ds_1}+\dots
\end{equation}

\begin{lemma}\cite{Rossi2014}
Let $\tpsi\in \grt_1$ be given by,
\begin{equation}
\tpsi(x,y):=\mathcal{T}exp\left(\int\limits_{0}^{\frac{1}{2}}{x(s)ds}\right).
\end{equation}
By acting with $\tpsi$ on $\PhiKZ$, we obtain a Drinfeld associator called the \textit{Alekseev-Torossian associator} $\PhiAT$, i.e.
\begin{equation}
\PhiAT(x,y):=(\tpsi\cdot \PhiKZ)(x,y).
\end{equation}
\end{lemma}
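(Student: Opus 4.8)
The statement has two parts: that $\tpsi$ as written is a genuine element of $\GRT_1$ --- so that, by the remark following the definition of $\GRT_1$, acting with it on $\PhiKZ$ produces a Drinfeld associator --- and that the associator thus obtained is the one named after Alekseev and Torossian. For the first part the plan is to reuse the reasoning that already places $\psi=\mathcal{T}\exp\big(\int_0^1 x(s)\,ds\big)$ in $\GRT_1$, but run only to ``time'' $\tfrac12$. Since each $x_{2j+1}$ lies in $\grt_1$, for every fixed $s$ the homogeneous expansion $x(s)=\sum_{j\ge 1}x_{2j+1}\,(s(s-1))^{2j}$ is a convergent element of the pro-nilpotent Lie algebra $\grt_1$. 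Working degree by degree, truncate $\cFLiexy$ at a fixed word length, so that $\GRT_1$ becomes a finite-dimensional simply connected unipotent group with Lie algebra the corresponding truncation of $\grt_1$; the path-ordered exponential is then the time-$t$ value $\gamma(t)$ of the solution of the left-invariant ODE $\dot\gamma=x(t)\,\gamma$, $\gamma(0)=1$, which stays in $\GRT_1$ for all $t$ because the generating vector field $x(t)$ is tangent to $\GRT_1$ (it lies in $\Lie(\GRT_1)=\grt_1$). Passing to the inverse limit gives $\mathcal{T}\exp\big(\int_0^{t}x(s)\,ds\big)\in\GRT_1$ for every $t\in[0,1]$, and in particular $\tpsi=\mathcal{T}\exp\big(\int_0^{1/2}x(s)\,ds\big)\in\GRT_1$.

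The part that cannot be obtained by formal manipulation is the identification $\tpsi\cdot\PhiKZ=\PhiAT$, and here I would follow \cite{Rossi2014}. The associator $\PhiAT$ has an intrinsic definition coming from Alekseev--Torossian's work on the Kashiwara--Vergne problem, \emph{a priori} unrelated to the action of a $\GRT_1$-element on $\PhiKZ$. The bridge is the symmetry $s\mapsto 1-s$: since $(1-s)\big((1-s)-1\big)=s(s-1)$, the generating path satisfies $x(1-s)=x(s)$, so the composition law for path-ordered exponentials factors $\psi=\mathcal{T}\exp\big(\int_0^1 x\big)$ through $\tpsi$ and the ``reflected'' factor $\mathcal{T}\exp\big(\int_{1/2}^1 x(s)\,ds\big)$; matching this factorization of the element carrying $\PhiKZ$ to $\PhiAKZ$ with the corresponding reflection symmetry in the definition of $\PhiAT$ identifies the first factor $\tpsi$ as exactly the element carrying $\PhiKZ$ to $\PhiAT$, so that $\tpsi\cdot\PhiKZ=\PhiAT$.

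The hard step is this second one: it is genuinely about the comparison of the Knizhnik--Zamolodchikov and Alekseev--Torossian constructions and the reflection symmetry underlying them, not a consequence of the free and transitive action of $\GRT_1$ on $\DAss$, so I would import it from \cite{Rossi2014}. It is worth noting that the same input is what shows $\tpsi\neq\psiOH$ in general --- the time-ordering corrections inside $\mathcal{T}\exp\big(\int_0^{1/2}x\big)$ keep it from being $\exp\big(\tfrac12 g\big)$ --- which is precisely why $\PhiAT$ and $\PhiOH$ must be treated as distinct associators in the coefficient computations that follow, and why each is a separate candidate for being a rational associator.
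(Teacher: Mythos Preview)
The paper does not prove this lemma at all; it is stated with a citation to \cite{Rossi2014} and used as a black box. So there is no ``paper's own proof'' to compare against --- your proposal is strictly more than what the paper offers.

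That said, a few remarks on your sketch. The first part is fine: the ODE argument that a path-ordered exponential of a $\grt_1$-valued curve stays in $\GRT_1$ (work in each finite-dimensional truncation, then pass to the limit) is the standard way to see that $\tpsi\in\GRT_1$. Note incidentally that the statement's ``$\tpsi\in\grt_1$'' is a typo in the paper; it should read $\GRT_1$, and you have treated it correctly.

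For the second part you are right that the identification $\tpsi\cdot\PhiKZ=\PhiAT$ is the real content and must be imported from \cite{Rossi2014}. Your heuristic about the symmetry $s\mapsto 1-s$ is suggestive but not by itself an argument: $\PhiAT$ is defined in \cite{Alekseev2010} through the Kashiwara--Vergne problem (or equivalently via Kontsevich-type configuration-space integrals), and there is no a priori ``reflection symmetry in the definition of $\PhiAT$'' to match against. The actual proof in \cite{Rossi2014} compares integral formulas and is not recoverable from the factorization of $\psi$ alone. Since you explicitly say you would import this step, your proposal is honest about its scope and agrees with the paper's own stance of citing the result.
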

This associator was first defined in the work of A. Alekseev and C. Torossian \cite{Alekseev2010}. As for $\PhiOH$, we ask whether $\PhiAT$ will be an associator with only rational coefficients. Unfortunately, again, this does not seem to be the case. Our second major result is the following theorem.
\begin{theorem}\label{thm:AT}
The coefficient in $\PhiAT$ of the word $w=x^2yx^4y$ is
\begin{equation*}
\cfrac{2048\zeta(3,5)-6293\zeta(3)\zeta(5)}{524288\pi^8}.
\end{equation*}
If the conjectured basis $\left\{\zeta(3,5),\zeta(3)\zeta(5),\zeta(3)^2\zeta(2),\zeta(2)^4\right\}$ from conjecture \ref{conjecture} is in fact a $\mathbb{Q}$-basis for multiple zeta values of weight 8, then this coefficient is irrational.
\end{theorem}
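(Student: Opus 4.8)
The plan is to repeat the bookkeeping that yielded Theorem~\ref{thm:OH}, with $\tpsi$ in place of $\psiOH$. Since $\PhiAT=\tpsi\cdot\PhiKZ$, the coefficient of $w=x^2yx^4y$ is computed from the product formula \eqref{eq:deg2} with $\Phi=\tpsi$ and $\Phi'=\PhiKZ$, so the one new task is to read off the relevant coefficients of $\tpsi$ directly from the path-ordered exponential $\mathcal{T}\exp\!\big(\int_0^{1/2}x(s)\,ds\big)$ rather than from $\exp(\tfrac g2)$.

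\emph{Degree $1$ coefficients of $\tpsi$.} Each $x_{2j+1}\in\grt_1$ has degree $\ge 1$ in $y$, so any term of $\mathcal{T}\exp$ with at least two factors of $x(s)$ is of $y$-degree $\ge 2$; hence the degree-$1$ part of $\tpsi$ comes only from $\int_0^{1/2}x(s)\,ds$, and only $x_{2n+1}$ contributes to a word of length $2n+1$ (even-length words get $0$). Since $(1-s)(-s)=s(s-1)$, the weight $(s(s-1))^{2n}$ is symmetric about $s=\tfrac12$, so $\int_0^{1/2}(s(s-1))^{2n}\,ds=\tfrac12\int_0^1(s(s-1))^{2n}\,ds$; comparing with the analogous expansion of $\psi$ shows the degree-$1$ coefficient of $\tpsi$ at $w$ equals $\tfrac12 c_w$, i.e.\ $-u_w$ for $|w|$ odd and $0$ for $|w|$ even --- exactly the degree-$1$ coefficients of $\psiOH$, which are known via Lemma~\ref{lemma:zeta}.

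\emph{The coefficient $e_w$ of $x^2yx^4y$ in $\tpsi$.} As $|w|=8$ is even, the single-integral term (producing only odd-length words) and the triple-and-higher integrals (of $y$-degree $\ge 3$, and of word-length a sum of three odd numbers, hence $\ge 9$) contribute nothing; only the double integral matters, contributing $I_{3,5}\,(x_3x_5)+I_{5,3}\,(x_5x_3)$, where $I_{3,5},I_{5,3}$ are the iterated integrals of $(s(s-1))^2$ and $(s(s-1))^4$ over $\{0<s_2<s_1<\tfrac12\}$ in the two orders. The word $x^2yx^4y$ has a unique factorisation into a length-$3$ prefix ($x^2y$) and a suffix ($x^4y$), and a unique one into a length-$5$ prefix ($x^2yx^2$) and a suffix ($x^2y$), so $e_w$ equals $I_{3,5}$ times (coeff.\ of $x^2y$ in $x_3$)(coeff.\ of $x^4y$ in $x_5$) plus $I_{5,3}$ times (coeff.\ of $x^2yx^2$ in $x_5$)(coeff.\ of $x^2y$ in $x_3$). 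These three word-coefficients of $x_3,x_5$ are obtained, by the same linear-term argument applied to $\psi=\mathcal{T}\exp(\int_0^1 x(s)\,ds)$, by dividing $c_{x^2y}$, $c_{x^4y}$, $c_{x^2yx^2}$ by $\int_0^1(s(s-1))^2\,ds=\tfrac1{30}$ resp.\ $\int_0^1(s(s-1))^4\,ds=\tfrac1{630}$. For the two iterated integrals I would use the shuffle identity $I_{3,5}+I_{5,3}=\big(\int_0^{1/2}(s(s-1))^2\,ds\big)\big(\int_0^{1/2}(s(s-1))^4\,ds\big)=\tfrac1{60}\cdot\tfrac1{1260}$, leaving only the single elementary polynomial integral $I_{5,3}$ to evaluate.

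\emph{Assembly and irrationality.} In \eqref{eq:deg2} every summand other than the leading $e_w$ involves only $u$-coefficients and degree-$1$ coefficients of $\tpsi$, which agree with those of $\psiOH$; hence the coefficient of $x^2yx^4y$ in $\PhiAT$ differs from the one in $\PhiOH$ (Theorem~\ref{thm:OH}) only by $e_w-d_w$, with $d_w$ the corresponding coefficient of $\psiOH$ given by \eqref{eq:dw}. Inserting the values of $u_{x^2y},u_{x^4y},u_{x^2yx^2},u_{yx^4}$ from Lemma~\ref{lemma:zeta} and using $(2\pi i)^8=256\pi^8$, this collapses to $\tfrac{2048\zeta(3,5)-6293\zeta(3)\zeta(5)}{524288\pi^8}$ --- in particular no $\zeta(2)\zeta(6)$-term survives. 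The irrationality claim then follows exactly as for $\PhiOH$: a rational value, together with $\pi^8=6^4\zeta(2)^4$, would force a nontrivial $\bbQ$-linear relation among $\zeta(3,5)$, $\zeta(3)\zeta(5)$, $\zeta(2)^4$ (the coefficient of $\zeta(3,5)$ being $2048\ne 0$), contradicting Conjecture~\ref{conjecture}. I expect the only real difficulty to be computational --- deciding which terms of the path-ordered exponential survive and evaluating $I_{5,3}$ without sign or arithmetic slips; a secondary point worth spelling out is that \eqref{eq:deg1}--\eqref{eq:deg2} are legitimate for $\tpsi$, which, although not in $\GRT_1$, is group-like and has the antisymmetric degree-$1$ part those formulas use.
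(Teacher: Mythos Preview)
Your overall strategy---computing the power-series coefficients of $\tpsi$ and then applying \eqref{eq:deg2}, exploiting the shortcut $\tilde f_w = f_w + (e_w - d_w)$ since the degree-$1$ coefficients of $\tpsi$ and $\psiOH$ coincide---is sound and more economical than the paper's route, which first derives the general formula \eqref{eq:tfw} for all degree-$2$ words of even length before specialising to $w=x^2yx^4y$. There is, however, a genuine gap in your computation of $e_w$.

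The product in the double-integral term of the path-ordered exponential is \emph{not} the ordinary concatenation product in $\PSKxy$ but the twisted product coming from the $\GRT_1$-structure: at $y$-degree~$2$ one has
\[
x_{2l+1}x_{2m+1}=c_{2l}c_{2m}\Bigl(\ad_x^{2l}(y)\,\ad_x^{2m}(y)+\ad_x^{2m}\!\bigl([y,\ad_x^{2l}(y)]\bigr)\Bigr),
\]
and your ``unique prefix/suffix factorisation'' argument captures only the first summand, omitting the derivation correction $\ad_x^{2m}([y,\ad_x^{2l}(y)])$. For the particular word $w=x^2yx^4y$ this correction happens to vanish for both $(l,m)=(1,2)$ and $(2,1)$: a direct check with the paper's expansion of $\ad_x^{2m}([y,\ad_x^{2l}(y)])$ shows that the coefficient of $x^2yx^4y$ is zero in each case (for $(l,m)=(2,1)$ the two contributing monomials cancel). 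Hence your value $e_w=c_2c_4(J_2^{1,2}+6J_2^{2,1})$ is correct and the rest of the argument goes through. But this vanishing is an accident of the chosen word, not a general fact, and must be verified explicitly; as written your computation of $e_w$ is unjustified. (A minor aside: contrary to your closing remark, $\tpsi$ \emph{is} an element of $\GRT_1$---the path-ordered exponential is taken in that group---so there is no issue about the legitimacy of \eqref{eq:deg1}--\eqref{eq:deg2}.)
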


\subsection{Towards a proof of Theorem \ref{thm:AT}}
Our first aim is to describe the elements $x_{2j+1} \in \grt_1\subset \hat{\mathbb{F}}_{\Lie}(x,y)$. For this we have to understand what the space of linear combinations of Lie words having $2j$ brackets looks like. This is where the notion of so-called Lyndon bases becomes useful. The main reference here is a book on free Lie algebras by C. Reutenauer \cite{Reutenauer}.
\begin{definition}
A word in $x,y$ which is the unique minimal element with respect to the lexicographical ordering within the set of its rotations is called \textit{Lyndon word}.
\end{definition}

\begin{remark}\cite{Reutenauer}
There is a bijection $\gamma$ between Lyndon words and a basis of the free Lie algebra. Let $w$ be a Lyndon word. If $w$ has length 1, set $\gamma(w)=w$. If $\left|w\right|\geq 2$, we may decompose $w$ as $w=uv$, where $u,v$ are again Lyndon words and $v$ is of maximal length. Thus define $\gamma(w)=\left[\gamma(u),\gamma(v)\right]$, recursively. 
\end{remark}

We will use this to find linear generators for the spaces of Lie words of degree 1 and 2 (in $y$). In degree 1, the Lyndon words are of the form $yx^a$ for $a\in \mathbb{N}_0$. The map $\gamma$ sends such a word to $\pm\ad_x^{a}(y)$, depending on the parity of $a$. Here $\ad_x(y)=\left[x,y\right]$. Therefore, the set $\left\{\ad_x^a(y)\right\}_{a\in\mathbb{N}_0}$ describes a basis for the vector space of Lie words of degree 1. 

In degree 2, the Lyndon words are given by $yx^{\alpha}yx^{\beta}$ with $\alpha<\beta, \alpha,\beta\in \mathbb{N}_0$. Decomposing $w=yx^{\alpha}yx^{\beta}$ as $w=uv$, $u=yx^{\alpha},v=yx^{\beta}$, we have that 
\begin{equation}
\gamma(w)=\left[\gamma(yx^{\alpha}),\gamma(yx^{\beta})\right].
\end{equation}
Using the definition of $\gamma$ on degree 1 words, we find
\begin{equation}
\gamma(w)=\pm\left[\ad_x^{\alpha}(y),\ad_x^{\beta}(y)\right].
\end{equation}
This way, $\left\{\left[\ad_x^{\alpha}(y),\ad_x^{\beta}(y)\right]\right\}_{\alpha<\beta}$ generates the space of Lie words of degree 2 linearly.

Let us fix some $n\geq 1$ and consider $x_{2n+1}\in\hat{\mathbb{F}}_{\Lie}(x,y)_{2n+1}$ (the linear span of Lie words in $2n$ brackets). Using the above bases, we may describe $x_{2n+1}$ up to degree 2. It will be of the form,
\begin{equation}
x_{2n+1}=c_{2n}\ad_{x}^{2n}(y)+\sum_{\alpha,\beta}{c_{\alpha,\beta}\left[\ad_x^{\alpha}(y),\ad_x^{\beta}(y)\right]}+\dots
\end{equation}
Here, $c_{2n}$, $c_{\alpha,\beta}\in \mathbb{K}$, $\alpha \in \mathbb{N}_{0}$, $\beta\in \mathbb{N}$, $\alpha<\beta$ such that $\alpha+\beta+2=2n+1$. Using this notation the first few terms of $\psi$ are thus given by
\begin{align*}
&\psi(x,y)=\mathcal{T}exp\left(\int\limits_{0}^{1}{x(s)ds}\right)=1+\int\limits_{0}^{1}{\sum\limits_{j=1}^{\infty}{x_{2j+1}}(s(s-1))^{2j}ds}+\dots\\
=&1+\sum\limits_{j=1}^{\infty}{\Big(c_{2j}\ad_{x}^{2j}(y)+\sum\limits_{\substack{0\leq\alpha<\beta\\ \alpha+\beta+2=2j+1}}{c_{\alpha,\beta}\left[\ad_x^{\alpha}(y),\ad_x^{\beta}(y)\right]}+\dots\Big)\int\limits_{0}^{1}{(s(s-1))^{2j}ds}}+\dots
\end{align*}

\begin{remark}
To pass from $\hat{\mathbb{F}}_{\Lie}(x,y)$ to $\mathbb{K}\left\langle \left\langle x,y \right\rangle\right\rangle$, one sets $\left[x,y\right]=xy-yx$. Below, we give some useful identities.
\begin{align*}
\ad_{x}^{2n}(y)&=\sum\limits_{i=0}^{2n}{{2n\choose i}(-1)^{i}x^iyx^{2n-i}}\\
\ad_{y}\ad_{x}^{2n}(y)&=\sum\limits_{i=0}^{2n}{{2n\choose i}(-1)^{i}(yx^iyx^{2n-i}-x^iyx^{2n-i}y)}\\
\left[\ad_x^{\alpha}(y),\ad_x^{\beta}(y)\right]&=\sum\limits_{j=0}^{\alpha}{\sum\limits_{i=0}^{\beta}{{\alpha \choose j}{\beta\choose i}(-1)^{j+i}\left(x^jyx^{\alpha-j+i}yx^{\beta-i}-x^iyx^{\beta-i+j}yx^{\alpha-j}\right)}}
\end{align*}
Moreover, we shall abbreviate the following integral by $I_{1}^{2n}$.
\begin{equation}\label{eq:integral}
I_{1}^{2n}:=\int\limits_{0}^{1}{(s(s-1))^{2n}ds}=\frac{(\Gamma(2n+1))^2}{\Gamma(4n+2)}=\frac{\left((2n)!\right)^2}{(4n+1)!}.
\end{equation}
Here $\Gamma(x)=\int\limits_{0}^{\infty}{e^{-u}u^{x-1}du}$ is the gamma function (see for instance \cite{Artin64}). 
\end{remark}

\begin{lemma}
The coefficient $c_{2n}$ is
\begin{equation}
c_{2n}=\frac{2(4n+1)!\zeta(2n+1)}{(2\pi i)^{2n+1}\left((2n)!\right)^2}.
\end{equation}
\end{lemma}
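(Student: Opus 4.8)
The plan is to isolate the part of $\psi(x,y)$ that is of degree $1$ in $y$ and to match it against the formula for the degree-$1$ coefficients of $\psi$ already obtained from the equation $\psi\cdot\PhiKZ=\PhiAKZ$.

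First I would observe that in the path-ordered exponential
\begin{equation*}
\psi(x,y)=1+\int_0^1 x(s)\,ds+\int_0^1 x(s_1)\int_0^{s_1}x(s_2)\,ds_2\,ds_1+\cdots,
\end{equation*}
every iterated-integral term beyond the first is a product of at least two elements $x(s_i)\in\grt_1\subset\hat{\mathbb{F}}_{\Lie}(x,y)$, hence of degree at least $2$ in $y$; likewise, inside each $x_{2j+1}$, the contribution of the terms $c_{\alpha,\beta}\left[\ad_x^{\alpha}(y),\ad_x^{\beta}(y)\right]$ is already of degree $2$ in $y$. Therefore the part of $\psi$ of degree $1$ in $y$ is exactly
\begin{equation*}
\int_0^1\Big(\sum_{j\geq 1}c_{2j}\,\ad_x^{2j}(y)\,(s(s-1))^{2j}\Big)\,ds=\sum_{j\geq 1}c_{2j}\,I_{1}^{2j}\,\ad_x^{2j}(y),
\end{equation*}
where $I_{1}^{2j}=\int_0^1(s(s-1))^{2j}\,ds=\dfrac{((2j)!)^2}{(4j+1)!}$ as in \eqref{eq:integral}.

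Next I would extract the coefficient of the single word $w=x^{2n}y$. Using the identity $\ad_x^{2j}(y)=\sum_{i=0}^{2j}\binom{2j}{i}(-1)^i x^iyx^{2j-i}$, the word $x^{2n}y$ appears in $\ad_x^{2j}(y)$ only for $j=n$ (all other $j$ give words of the wrong length), and then with coefficient $\binom{2n}{2n}(-1)^{2n}=1$. Hence the coefficient of $x^{2n}y$ in $\psi$ equals $c_{2n}\,I_{1}^{2n}$. On the other hand, $\left|w\right|=2n+1$ is odd and $n_w=1$, so by the degree-$1$ analysis of $\psi\cdot\PhiKZ=\PhiAKZ$ one has $c_w=-2u_w$, with $u_w=\dfrac{-1}{(2\pi i)^{2n+1}}\zeta(x^{2n}y)=\dfrac{-1}{(2\pi i)^{2n+1}}\zeta(2n+1)$ (the word $x^{2n}y$ is already admissible, so no regularization is needed). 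Thus $c_{x^{2n}y}=\dfrac{2\zeta(2n+1)}{(2\pi i)^{2n+1}}$.

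Equating the two expressions gives $c_{2n}\,I_{1}^{2n}=\dfrac{2\zeta(2n+1)}{(2\pi i)^{2n+1}}$, and substituting $I_{1}^{2n}=\dfrac{((2n)!)^2}{(4n+1)!}$ yields $c_{2n}=\dfrac{2(4n+1)!\,\zeta(2n+1)}{(2\pi i)^{2n+1}((2n)!)^2}$, as claimed. There is no substantial obstacle; the only step requiring a little care is the bookkeeping that guarantees nothing of degree $\geq 2$ in $y$ — neither the higher iterated integrals nor the bracket terms of the $x_{2j+1}$ — contributes to the coefficient of $x^{2n}y$, so that the matching of the degree-$1$ parts is legitimate.
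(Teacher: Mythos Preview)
Your proof is correct and follows essentially the same approach as the paper: isolate the degree-$1$ part of $\psi$ coming from the first integral in the path-ordered exponential and match it against the already-established relation $c_w=-2u_w$ for odd-length degree-$1$ words. The only minor difference is that you pick the single admissible word $x^{2n}y$ (so no shuffle regularization is needed), whereas the paper writes the identity for all $x^jyx^{2n-j}$ and invokes Lemma~\ref{lemma:zeta} to regularize; your shortcut is a small but legitimate simplification of the same argument.
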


\begin{proof}
To find $c_{2n}$, we have to consider terms of degree 1 (in $y$). These are all of the form $x^iyx^{2n-i}$ for some $i\in\left\{0,\dots,2n\right\}$ (as the total number of $x$'s and $y$'s has to be $2n+1$). In the defining equation $\psi\cdot\PhiKZ=\PhiAKZ$, these terms appear as follows,
\begin{align*}
&(1+\frac{\left((2n)!\right)^2}{(4n+1)!} c_{2n}\ad_{x}^{2n}(y)+\dots)(1+\sum\limits_{i=0}^{2n}{u_{x^iyx^{2n-i}}x^iyx^{2n-i}}+\dots)\\
=&(1+\frac{\left((2n)!\right)^2}{(4n+1)!} c_{2n}\sum\limits_{i=0}^{2n}{{2n\choose i}(-1)^{i}x^iyx^{2n-i}}+\dots)(1+\sum\limits_{i=0}^{2n}{u_{x^iyx^{2n-i}}x^iyx^{2n-i}}+\dots)\\
=&-\sum\limits_{i=0}^{2n}{u_{x^iyx^{2n-i}}x^iyx^{2n-i}}+\dots
\end{align*}
This is equivalent to saying that for all $j\in\left\{0,\dots,2n\right\}$,
\begin{equation}\label{eq:c2n}
\frac{\left((2n)!\right)^2}{(4n+1)!} c_{2n}{2n\choose j}(-1)^{j}=-2u_{x^jyx^{2n-j}}.
\end{equation}
Recall that $u_{x^jyx^{2n-j}}=\frac{-1}{(2\pi i)^{2n+1}}\zeta(x^jyx^{2n-j})$, where $\zeta(w)$ is the regularized multiple zeta value of the word $w$. By regularizing, we find (see Lemma \ref{lemma:zeta}),
\begin{equation*}
\zeta(x^jyx^{2n-j})=(-1)^{2n-j}{2n\choose j}\zeta(2n+1).
\end{equation*}
The above equation \eqref{eq:c2n} becomes,
\begin{align*}
\frac{\left((2n)!\right)^2}{(4n+1)!} c_{2n}{2n\choose j}(-1)^{j}&=-2\frac{-1}{(2\pi i)^{2n+1}}(-1)^{2n-j}{2n\choose j}\zeta(2n+1)\\
\Leftrightarrow\frac{\left((2n)!\right)^2}{(4n+1)!} c_{2n}&=2\frac{1}{(2\pi i)^{2n+1}}\zeta(2n+1)
\end{align*}
from which the statement follows.
\end{proof}

\begin{lemma}
Fix $n\geq 1$ and let $a,b,c\in\mathbb{N}_{0}$ such that $a+b+c+2=2n+1$. The coefficients $c_{\alpha,\beta}$ satisfy the following set of equations.
\begin{align}
\nonumber -2u_{x^ayx^byx^c}=&I_{1}^{2n}\sum\limits_{\substack{0\leq\alpha<\beta\\  \alpha+\beta+2=2n+1}}{c_{\alpha,\beta}\left({\alpha\choose a}{\beta\choose c}(-1)^{a+\beta-c}-{\alpha\choose c}{\beta\choose a}(-1)^{a+\alpha-c}\right)}\\ 
\label{eq:cab} +&\sum\limits_{\substack{p\in \mathbb{N}_0,s\in \mathbb{N}\\2s=b+a-p}}{I_1^{2s} c_{2s}\left({2s\choose a}(-1)^a-{2s\choose b}(-1)^{b}\right) u_{x^pyx^c}}\\
\nonumber +&\sum\limits_{\substack{q \in \mathbb{N}_0, s\in \mathbb{N}\\ 2s=b+c-q}}{I_{1}^{2s}c_{2s}{2s\choose b}(-1)^b u_{x^ayx^q}}.
\end{align}

\end{lemma}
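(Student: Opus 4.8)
The plan is to produce two descriptions of the coefficient $c_w$ of the monomial $w=x^ayx^byx^c$ in $\psi$ and to eliminate $c_w$ between them. Throughout write $2n+1=a+b+c+2$, so that $w$ has odd length $2n+1$ and $\alpha+\beta=a+b+c$ for every pair $(\alpha,\beta)$ with $\alpha+\beta+2=2n+1$. The first description will come from the path-ordered exponential formula for $\psi$; the second from the identity \eqref{eq:cw2}, which is the consequence of $\psi\cdot\PhiKZ=\PhiAKZ$ derived above.

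First I would decide which terms of $\psi=1+\int_0^1 x(s)\,ds+\int_0^1 x(s_1)\int_0^{s_1}x(s_2)\,ds_2\,ds_1+\cdots$ can contribute to a monomial of degree $2$ in $y$. The $k$-th iterated integral has degree $\ge k$ in $y$, because each factor $x(s_i)$ lies in $\grt_1\subset\hat{\mathbb{F}}_{\Lie}(x,y)$ and hence has degree $\ge 1$ in $y$; so only $k=1$ and $k=2$ are relevant, and $k=2$ only through the product of the degree-$1$ parts $c_{2j}\ad_x^{2j}(y)$ and $c_{2k}\ad_x^{2k}(y)$. That product has length $(2j+1)+(2k+1)=2j+2k+2$, which is even and so cannot equal the odd length $2n+1$ of $w$. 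Hence only the degree-$2$ part of $\int_0^1 x(s)\,ds$ contributes, and only the summands $I_1^{2j}c_{\alpha,\beta}[\ad_x^\alpha(y),\ad_x^\beta(y)]$ with $\alpha+\beta+2=2j+1=2n+1$ survive, giving
\[
c_w=I_1^{2n}\sum_{\substack{0\le\alpha<\beta\\ \alpha+\beta+2=2n+1}}c_{\alpha,\beta}\Big(\text{coefficient of }x^ayx^byx^c\text{ in }[\ad_x^\alpha(y),\ad_x^\beta(y)]\Big).
\]
Reading off this coefficient from the identity for $[\ad_x^\alpha(y),\ad_x^\beta(y)]$ recorded above --- matching $x^jyx^{\alpha-j+i}yx^{\beta-i}$ with $w$ forces $(j,i)=(a,\beta-c)$, matching $x^iyx^{\beta-i+j}yx^{\alpha-j}$ forces $(i,j)=(a,\alpha-c)$, with the accompanying binomials and signs collapsing appropriately --- produces exactly the first line of \eqref{eq:cab}, out-of-range binomials being understood as $0$.

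Next I would feed this into \eqref{eq:cw2}, which for $|w|=2n+1$ odd reads
\[
c_w=-2u_w-\sum_{j=0}^{b}c_{x^ayx^j}u_{x^{b-j}yx^c}+\sum_{j=0}^{a}u_{x^jyx^c}c_{x^{a-j}yx^b}-\sum_{j=0}^{c}u_{x^ayx^j}c_{x^byx^{c-j}}.
\]
Solving this for $-2u_w$ and inserting the expression for $c_w$ just obtained writes $-2u_w$ as the first line of \eqref{eq:cab} plus the three convolution sums $\sum_j c_{x^ayx^j}u_{x^{b-j}yx^c}$, $-\sum_j u_{x^jyx^c}c_{x^{a-j}yx^b}$ and $\sum_j u_{x^ayx^j}c_{x^byx^{c-j}}$. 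In each of these the remaining $c$'s are degree-$1$ coefficients of $\psi$, and since the degree-$1$ part of $\int_0^1 x(s)\,ds$ is $\sum_{j\ge1}I_1^{2j}c_{2j}\ad_x^{2j}(y)$, one has $c_{x^pyx^q}=I_1^{p+q}c_{p+q}\binom{p+q}{p}(-1)^p$ when $p+q$ is even and $c_{x^pyx^q}=0$ otherwise. Substituting this and reindexing each sum by $s$, with $2s$ equal to the sum of the two exponents flanking the surviving single-$y$ factor of the degree-$1$ word, and using $\binom{2s}{m}=\binom{2s}{2s-m}$ and $(-1)^{2s-m}=(-1)^m$, the first two sums combine into $\sum_{2s=a+b-p}I_1^{2s}c_{2s}\big(\binom{2s}{a}(-1)^a-\binom{2s}{b}(-1)^b\big)u_{x^pyx^c}$ and the third becomes $\sum_{2s=b+c-q}I_1^{2s}c_{2s}\binom{2s}{b}(-1)^b u_{x^ayx^q}$, vanishing binomials absorbing the boundary. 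This is precisely \eqref{eq:cab}.

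The one delicate point is the bookkeeping in this last step: for each of the three convolution sums one must keep straight which exponent becomes $2s$ minus a fixed one (and hence yields $\binom{2s}{a}$ rather than $\binom{2s}{b}$), and check that the summation ranges in \eqref{eq:cab} --- simply $p,q\ge 0$ and $s\ge 1$ --- coincide with those thrown up by the substitution once vanishing binomials are accounted for, including the harmless would-be $s=0$ or $c_0$ terms, which drop out because the coefficient of $y$ in $\psi$ vanishes. The structural fact that makes the computation collapse to this closed form is the parity obstruction that annihilates the double integral in $\psi$.
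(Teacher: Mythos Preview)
Your argument is correct and, in fact, organised somewhat more cleanly than the paper's. The paper proves the lemma by expanding the relation $\psi\cdot\PhiKZ=\PhiAKZ$ \emph{directly} with $\psi$ written in its path-ordered exponential form, tracking the word $x^ayx^byx^c$ through three sources: the constant times $u_w$, the degree-two Lie part $I_1^{2n}\sum c_{\alpha,\beta}[\ad_x^\alpha(y),\ad_x^\beta(y)]$ acting on $1$, and the degree-one piece $I_1^{2s}c_{2s}\ad_x^{2s}(y)$ acting on degree-one words of $\PhiKZ$ via the $\grt_1$-action formula (multiplication plus the derivation $[y,\cdot]\partial_y$). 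You instead factor the computation through the already established identity \eqref{eq:cw2}: first isolate the power-series coefficient $c_w$ of $\psi$ itself using the path-ordered expansion and a parity observation, then feed that into \eqref{eq:cw2} and substitute the known degree-one values $c_{x^pyx^q}=I_1^{p+q}c_{p+q}\binom{p+q}{p}(-1)^p$.

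The gain in your approach is modularity: \eqref{eq:cw2} has already absorbed the bookkeeping of the $\GRT_1$-product, so you never have to redo the derivation terms by hand. The parity argument that kills the double-integral contribution is the genuinely new ingredient in your route; note that it goes through regardless of whether one reads the iterated product in the path-ordered exponential as the ordinary associative product or as the $\grt_1$-action product (both preserve total length, and both send a pair of odd-length factors to something of even length), so your argument is robust even though the paper later treats that product as the action when computing $\PhiAT$. The paper's direct approach, on the other hand, makes the origin of each of the three sums in \eqref{eq:cab} visible at once and does not rely on \eqref{eq:cw2}, at the cost of repeating the unpacking of the $\grt_1$-action.
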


\begin{proof}
Let $n,a,b,c$ be as in the lemma and set $w=x^ayx^byx^c$. This word of degree 2 (in $y$) will appear within the following terms of the defining equation $\psi\cdot \PhiKZ=\PhiAKZ$,
\begin{align*}
&\Bigl(\Bigr. 1+I_{1}^{2n}\sum_{\alpha,\beta}{c_{\alpha,\beta}\left[\ad_x^{\alpha}(y),\ad_x^{\beta}(y)\right]}
+\sum\limits_{\substack{s \\ 2s+p+q+2=2n+1}}{I_{1}^{2s}c_{2s}\ad_{x}^{2s}(y)}+\dots\Bigl.\Bigr)\\
&\cdot\Bigl(\Bigr. 1+u_{x^ayx^byx^c}x^ayx^byx^c+\sum\limits_{\substack{p,q \\ 2s+p+q+2=2n+1}}{u_{x^pyx^q}x^pyx^q}+\dots\Bigl.\Bigr)\\
=&-u_{x^ayx^byx^c}x^ayx^byx^c+\dots
\end{align*}

Let us try to pick out the terms giving $x^ayx^byx^c$ in the product on the left.
\begin{enumerate}[label=(\roman*)]
\item{The simplest one is $1\cdot u_{x^ayx^byx^c}x^ayx^byx^c$.}
\item{Consider
\begin{align*}
&I_{1}^{2n}\sum\limits_{\alpha,\beta}{c_{\alpha,\beta}\left[\ad_x^{\alpha}(y),\ad_x^{\beta}(y)\right]}\cdot 1\\
=&I_{1}^{2n}\sum\limits_{\alpha,\beta}{c_{\alpha,\beta}\sum\limits_{j=0}^{\alpha}{\sum\limits_{i=0}^{\beta}{{\alpha \choose j}{\beta\choose i}(-1)^{j+i}\left(x^jyx^{\alpha-j+i}yx^{\beta-i}-x^iyx^{\beta-i+j}yx^{\alpha-j}\right)}}}\cdot 1
\end{align*}
For fixed $\alpha$ and $\beta$, this sum contributes  with the first term $x^jyx^{\alpha-j+i}yx^{\beta-i}$ when $j=a$ and $i=\beta-c$, and with the second term $-x^iyx^{\beta-i+j}yx^{\alpha-j}$ whenever $j=\alpha-c$ and $i=a$. Thus the coefficients we have to consider are given by
\begin{equation*}
I_{1}^{2n}\sum\limits_{\substack{0\leq\alpha<\beta\\ \alpha+\beta+2=2n+1}}{c_{\alpha,\beta}\left({\alpha\choose a}{\beta\choose c}(-1)^{a+\beta-c}-{\alpha\choose c}{\beta\choose a}(-1)^{a+\alpha-c}\right)}.
\end{equation*}
}
\item{The last terms we need to deal with are given by the action of $I_{1}^{2s}c_{2s}\ad_{x}^{2s}(y)$ on elements of degree 1 (in $y$) via,
\begin{equation*}
I_{1}^{2s}c_{2s}\ad_{x}^{2s}(y)\cdot (x^pyx^q)+\left[y,I_{1}^{2s}c_{2s}\ad_{x}^{2s}(y)\right]\partial_{y}(x^pyx^q)
\end{equation*}
for $s\in \mathbb{N}, p,q \in \mathbb{N}_{0}$ such that $2s+p+q+2=2n+1$. Expanding the left product in this expression gives,
\begin{align*}
&I_{1}^{2s}c_{2s}\ad_{x}^{2s}(y)\cdot (x^pyx^q)\\
=&I_{1}^{2s}c_{2s}\sum\limits_{i=0}^{2s}{{2s\choose i}(-1)^{i}x^iyx^{2s-i}}\cdot (x^pyx^q)\\
=&I_{1}^{2s}c_{2s}\sum\limits_{i=0}^{2s}{{2s\choose i}(-1)^{i}x^iyx^{2s-i+p}yx^q}
\end{align*}
This contributes when $i=a$ and $q=c$. Therefore, the coefficients we are interested in are of the form,
\begin{equation*}
\sum\limits_{\substack{p\in \mathbb{N}_0,s\in \mathbb{N}\\2s=b+a-p}}{I_1^{2s} c_{2s}{2s\choose a}(-1)^a u_{x^pyx^c}}.
\end{equation*}
The right part of the action of $I_{1}^{2s}c_{2s}\ad_{x}^{2s}(y)$ on $x^pyx^q$ expands to,
\begin{align*}
&\left[y,I_{1}^{2s}c_{2s}\ad_{x}^{2s}(y)\right]\partial_{y}(x^pyx^q)\\
=&I_{1}^{2s}c_{2s}x^p\left[y,\ad_x^{2s}(y)\right]x^q\\
=&I_{1}^{2s}c_{2s}x^p\ad_y\ad_x^{2s}(y)x^q\\
=&I_{1}^{2s}c_{2s}x^p\sum\limits_{i=0}^{2s}{{2s\choose i}(-1)^{i}(yx^iyx^{2s-i}-x^iyx^{2s-i}y)}x^q\\
=&I_{1}^{2s}c_{2s}\sum\limits_{i=0}^{2s}{{2s\choose i}(-1)^{i}(x^pyx^iyx^{2s-i+q}-x^{p+i}yx^{2s-i}yx^q)}.
\end{align*}
This contributes with the first term $x^pyx^iyx^{2s-i+q}$ when $p=a$ and $i=b$, and with the second term $-x^{p+i}yx^{2s-i}yx^q$ when $q=c$ and $i=a-p$. The corresponding coefficients are thus,
\begin{equation*}
\sum\limits_{\substack{q \in \mathbb{N}_0, s\in \mathbb{N}\\ 2s=b+c-q}}{I_{1}^{2s}c_{2s}{2s\choose b}(-1)^b u_{x^ayx^{q}}}-\sum\limits_{\substack{p\in \mathbb{N}_0,s\in \mathbb{N}\\2s=b+a-p}}{I_{1}^{2s}c_{2s}{2s\choose a-p}(-1)^{a-p} u_{x^{p}yx^c}}.
\end{equation*}
Note that in the last sum, $a-p=2s-b$ for all $s$ and thus we may replace ${2s\choose a-p}$ by ${2s\choose b}$ and $(-1)^{a-p}$ by $(-1)^b$.
}
\end{enumerate}

Hence, in summary, comparing the coefficients of $w=x^ayx^byx^c$ with $a+b+c+2=2n+1$ in the equation $\psi\cdot\PhiKZ=\PhiAKZ$ yields the following relation for the $c_{\alpha,\beta}$.
\begin{equation}
\begin{aligned}
-2u_{x^ayx^byx^c}\stackrel{!}{=}&I_{1}^{2n}\sum\limits_{\substack{0\leq\alpha<\beta\\ \alpha+\beta+2=2n+1}}{c_{\alpha,\beta}\left({\alpha\choose a}{\beta\choose c}(-1)^{a+\beta-c}-{\alpha\choose c}{\beta\choose a}(-1)^{a+\alpha-c}\right)}\\
&+\sum\limits_{\substack{p\in \mathbb{N}_0,s\in \mathbb{N}\\2s=b+a-p}}{I_1^{2s} c_{2s}\left({2s\choose a}(-1)^a-{2s\choose b}(-1)^{b}\right) u_{x^pyx^c}}\\
&+\sum\limits_{\substack{q \in \mathbb{N}_0, s\in \mathbb{N}\\ 2s=b+c-q}}{I_{1}^{2s}c_{2s}{2s\choose b}(-1)^b u_{x^ayx^q}}.
\end{aligned}
\end{equation}
\end{proof}

The condition $a+b+c+2=2n+1$ gives us $\frac{(2n+1)2n}{2}$ linear equations in the $n$ unknowns $c_{\alpha,\beta}$. By implementing the above equations for $n\leq 5$, and thereafter solving them numerically, we obtain numerical approximations of the $c_{\alpha,\beta}$. More important than the actual value of these numbers is the fact, that a computer is able to solve the overdetermined system of equations above. This tells us that the cumbersome algebraic manipulations performed so far have a good chance of being correct. 

In order to give a formula for the coefficients of $\PhiAT$, we need the following notion.
\begin{definition}
The \textit{incomplete beta function} $B_{x}(a,b)$ is defined as
\begin{equation*}
B_{x}(a,b)=\int\limits_{0}^{x}{t^{a-1}(1-t)^{b-1} dt}.
\end{equation*}
\end{definition}
When $x=1$, $B_1(a,b)=:B(a,b)$ describes the usual beta function \cite{Artin64}.
\begin{lemma}
The quotient $I_{x}(a,b):=\frac{B_x(a,b)}{B(a,b)}$ satisfies the relation
\begin{equation*}
I_{x}(a,b)=1-I_{1-x}(b,a)
\end{equation*}
\end{lemma}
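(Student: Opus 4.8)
The plan is to exploit the reflection symmetry $t\mapsto 1-t$ of the integrand defining the incomplete beta function, combined with the symmetry $B(a,b)=B(b,a)$ of the ordinary beta function. First I would split the full beta integral at the point $x$, writing
\begin{equation*}
B(a,b)=\int_0^1 t^{a-1}(1-t)^{b-1}\,dt = B_{x}(a,b) + \int_x^1 t^{a-1}(1-t)^{b-1}\,dt .
\end{equation*}
Next, in the remaining integral over $[x,1]$ I would perform the change of variables $u=1-t$, which maps the interval $[x,1]$ onto $[0,1-x]$, absorbs the sign of $du=-dt$ into the reversal of the limits, and interchanges the two exponents, yielding
\begin{equation*}
\int_x^1 t^{a-1}(1-t)^{b-1}\,dt = \int_0^{1-x} u^{b-1}(1-u)^{a-1}\,du = B_{1-x}(b,a).
\end{equation*}
Combining the two displays gives the additive identity $B(a,b)=B_{x}(a,b)+B_{1-x}(b,a)$.

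Finally, I would divide this identity through by $B(a,b)$, and in the term coming from $B_{1-x}(b,a)$ use $B(a,b)=B(b,a)$ so as to form the normalized quotient with matching arguments; this turns the identity into $1 = I_{x}(a,b) + I_{1-x}(b,a)$, which is precisely the asserted relation. The argument is essentially a one-line change of variables, so there is no genuine obstacle; the only points deserving a moment's care are that $a,b>0$ (so all integrals in question converge) and that the orientation of the integral is tracked correctly under $u=1-t$. I expect the write-up to be just the three displayed equalities above followed by the division step.
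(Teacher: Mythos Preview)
Your proof is correct and follows essentially the same approach as the paper: both arguments rest on the substitution $u=1-t$ and a splitting of the full beta integral at the point $x$ (respectively $1-x$), the only cosmetic difference being that the paper applies the substitution directly to the quotient $I_x(a,b)$ and then adds and subtracts $\int_0^{1-x}$ in the numerator, whereas you first split $B(a,b)=B_x(a,b)+\int_x^1$ and then substitute in the tail integral before dividing through.
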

\begin{proof}
The relation is obtained by substituting the variable $t$ with $1-u$ in the definition.
\begin{align*}
I_{x}(a,b)&=\frac{\int\limits_{0}^{x}{t^{a-1}(1-t)^{b-1} dt}}{\int\limits_{0}^{1}{t^{a-1}(1-t)^{b-1} dt}}=\frac{\int\limits_{1-x}^{1}{(1-u)^{a-1} u^{b-1} du}}{\int\limits_{0}^{1}{(1-u)^{a-1} u^{b-1} du}}\\
&=\frac{\int\limits_{1-x}^{1}{(1-u)^{a-1} u^{b-1} du}+\int\limits_{0}^{1-x}{(1-u)^{a-1} u^{b-1} du}-\int\limits_{0}^{1-x}{(1-u)^{a-1} u^{b-1} du}}{\int\limits_{0}^{1}{(1-u)^{a-1} u^{b-1} du}}\\
&=1-\frac{\int\limits_{0}^{1-x}{(1-u)^{a-1} u^{b-1} du}}{\int\limits_{0}^{1}{(1-u)^{a-1} u^{b-1} du}}=1-I_{1-x}(b,a).
\end{align*}
\end{proof}
Using this fact, we can easily prove the following
\begin{lemma}\label{lemma:integral}
The integral $\int\limits_{0}^{\frac{1}{2}}{(s(s-1))^{2n}ds}$ which we shall abbreviate by $J_1^{2n}$ has the value
\begin{equation}
J_{1}^{2n}=\frac{\left((2n)!\right)^2}{2(4n+1)!}
\end{equation}
\end{lemma}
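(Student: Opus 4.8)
The plan is to identify $J_1^{2n}$ with a value of the incomplete beta function and then to invoke the symmetry identity $I_x(a,b)=1-I_{1-x}(b,a)$ proved just above, which at the symmetric point forces the ratio to equal $\tfrac12$.

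First I would massage the integrand. Because the exponent $2n$ is even, $(s(s-1))^{2n}=s^{2n}(s-1)^{2n}=s^{2n}(1-s)^{2n}$, so that
\[
J_1^{2n}=\int_0^{1/2}s^{2n}(1-s)^{2n}\,ds=\int_0^{1/2}t^{(2n+1)-1}(1-t)^{(2n+1)-1}\,dt=B_{1/2}(2n+1,2n+1),
\]
which is precisely the incomplete beta function with parameters $a=b=2n+1$.

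Next, writing $B_{1/2}(2n+1,2n+1)=I_{1/2}(2n+1,2n+1)\,B(2n+1,2n+1)$, I would apply the identity $I_x(a,b)=1-I_{1-x}(b,a)$ at $x=\tfrac12$ with $a=b=2n+1$. Since $1-x=\tfrac12$ and the two parameters coincide, this gives $I_{1/2}(2n+1,2n+1)=1-I_{1/2}(2n+1,2n+1)$, hence $I_{1/2}(2n+1,2n+1)=\tfrac12$. Finally $B(2n+1,2n+1)=\Gamma(2n+1)^2/\Gamma(4n+2)=((2n)!)^2/(4n+1)!$ — this is exactly the quantity $I_1^{2n}$ recorded in \eqref{eq:integral} — so $J_1^{2n}=\tfrac12\cdot((2n)!)^2/(4n+1)!$, as claimed.

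There is no genuine obstacle here: the argument is purely formal once one recognizes the incomplete beta function, and the work was already done in the preceding lemma. The only point requiring a little care is the index shift between the exponents $2n$ in the integrand and the beta-function parameters $2n+1$, together with the observation $(s-1)^{2n}=(1-s)^{2n}$ that makes the integral coincide with the beta integral exactly.
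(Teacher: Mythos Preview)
Your proof is correct and follows essentially the same route as the paper: both recognize $J_1^{2n}=B_{1/2}(2n+1,2n+1)$, apply the symmetry identity $I_x(a,b)=1-I_{1-x}(b,a)$ at $x=\tfrac12$ with equal parameters to get $I_{1/2}=\tfrac12$, and then use the known value $I_1^{2n}=B(2n+1,2n+1)=((2n)!)^2/(4n+1)!$ from equation~\eqref{eq:integral}. Your explicit remark that $(s-1)^{2n}=(1-s)^{2n}$ justifies the identification with the beta integral is a small clarification the paper leaves implicit.
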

\begin{proof}
We use the above identity to get,
\begin{align*}
\frac{J_{1}^{2n}}{I_{1}^{2n}}&=\frac{B_{\frac{1}{2}}(2n+1,2n+1)}{B(2n+1,2n+1)}=1-\frac{B_{1-\frac{1}{2}}(2n+1,2n+1)}{B(2n+1,2n+1)}=1-\frac{J_{1}^{2n}}{I_{1}^{2n}}\\
\Leftrightarrow J_{1}^{2n}&=\frac{1}{2}I_{1}^{2n}=\frac{\left((2n)!\right)^2}{2(4n+1)!}.
\end{align*}
\end{proof}


We now have all the tools to describe the coefficients of the Alekseev-Torossian associator $\PhiAT$. For this, write 
\begin{equation*}
\PhiAT(x,y):=1+\sum\limits_{w; \left|w\right|\geq 2}{\tilde{f}_w w}
\end{equation*}
and set 
\begin{equation}\label{abbr}
J_{2}^{l,m}:=\int\limits_{0}^{\frac{1}{2}}{(s_1(s_1-1))^{2l}\int\limits^{s_1}_{0}{(s_2(s_2-1))^{2m}ds_2}ds_1}.
\end{equation}
\begin{lemma}
The coefficients of words in degree $1$ and $2$ of $\PhiAT$ are given by
\begin{align}
\label{eq:atdeg1}\tilde{f}_{x^ayx^b}=&
\begin{cases} 
      0 & a+b+1=2n+1\\
      u_{x^ayx^b} & a+b+1=2n
\end{cases}\\
\label{eq:atdeg2}\tilde{f}_{x^ayx^byx^c}=&
\begin{cases}
0  & a+b+c+2=2n+1\\
u_{x^ayx^byx^c}+K(a,b,c) & a+b+c+2=2n
\end{cases}
\end{align}
where 
\begin{align*}
K(a,b,c)=&\sum\limits_{\substack{l,m\\2l+2m+2=2n}}J_2^{l,m}c_{2l}c_{2m}\biggl({2l\choose a}{2m\choose c}(-1)^{a-c}\\
+&{2m\choose a}{2l\choose b}(-1)^{a+b}-{2m\choose c}{2l\choose b}(-1)^{b+c}\biggr)\\
+&\sum\limits_{\substack{p\in \mathbb{N}_0,s\in \mathbb{N}\\2s=b+a-p}}{\frac{(-1)^{c+1}(p+c)!}{(2\pi i)^{2n}p!c!}\zeta(2s+1)\zeta(p+c+1)\left({2s\choose a}(-1)^a-{2s\choose b}(-1)^{b}\right)}\\
+&\sum\limits_{\substack{q \in \mathbb{N}_0, s\in \mathbb{N}\\ 2s=b+c-q}}{\frac{(-1)^{q+1}(a+q)!}{(2\pi i)^{2n}a!q!}\zeta(2s+1)\zeta(a+q+1){2s\choose b}(-1)^b }.
\end{align*}
\end{lemma}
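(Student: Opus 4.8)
The plan is to expand $\tpsi=\mathcal{T}\exp\bigl(\int_0^{1/2}x(s)\,ds\bigr)=1+\int_0^{1/2}x(s)\,ds+\int_0^{1/2}x(s_1)\int_0^{s_1}x(s_2)\,ds_2\,ds_1+\cdots$, truncate it to $y$-degree $\le 2$, read off its coefficients $\tilde d_w$ in degrees $1$ and $2$, and substitute them into the general product formulas \eqref{eq:deg1} and \eqref{eq:deg2} applied to $\tpsi\cdot\PhiKZ=\PhiAT$ (the product is of the form \eqref{eq:product}, with $\tpsi$ group-like and having no linear term). Since every $x_{2j+1}\in\grt_1$ has $y$-degree $\ge 1$, only the single and double integral can contribute up to $y$-degree $2$, and the relevant parts of $x_{2j+1}$ are just $c_{2j}\ad_x^{2j}(y)$ in degree $1$ and $\sum_{\alpha<\beta}c_{\alpha,\beta}[\ad_x^{\alpha}(y),\ad_x^{\beta}(y)]$ in degree $2$. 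Combining the value $J_1^{2n}=\tfrac12 I_1^{2n}$ from Lemma \ref{lemma:integral} with the formula for $c_{2n}$ gives the clean identity $J_1^{2n}c_{2n}=\zeta(2n+1)/(2\pi i)^{2n+1}$; expanding $\ad_x^{2n}(y)$ and comparing with Lemma \ref{lemma:zeta} then yields $\tilde d_{x^pyx^q}=-u_{x^pyx^q}$ when $p+q$ is even and $\tilde d_{x^pyx^q}=0$ when $p+q$ is odd. Feeding this into \eqref{eq:deg1} gives $\tilde f_{x^ayx^b}=\tilde d_{x^ayx^b}+u_{x^ayx^b}$, which is exactly \eqref{eq:atdeg1}.

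For $y$-degree $2$ I would write $w=x^ayx^byx^c$ and split on the parity of $|w|=a+b+c+2$. The brackets $[\ad_x^{\alpha}(y),\ad_x^{\beta}(y)]$ are words of odd length, while the double integral contributes only even-length words $\ad_x^{2l}(y)\ad_x^{2m}(y)$; hence for $|w|=2n+1$ odd the only contribution is $\tilde d_w=J_1^{2n}\sum_{\alpha<\beta}c_{\alpha,\beta}\bigl({\alpha\choose a}{\beta\choose c}(-1)^{a+\beta-c}-{\alpha\choose c}{\beta\choose a}(-1)^{a+\alpha-c}\bigr)$, and for $|w|$ even the only contribution is $\tilde d_w=\sum_{l,m}J_2^{l,m}c_{2l}c_{2m}{2l\choose a}{2m\choose c}(-1)^{a-c}$, which is already the first summand of $K(a,b,c)$. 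In the odd case the sum over $\alpha<\beta$ here is precisely the one occurring on the right-hand side of \eqref{eq:cab}, so $I_1^{2n}$ times it equals $-2u_w-T$, where $T$ abbreviates the two $c_{2s}$-sums of \eqref{eq:cab}; since $J_1^{2n}=\tfrac12 I_1^{2n}$ this gives $\tilde d_w=-u_w-\tfrac12 T$.

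Next one evaluates the three correction sums $S_1,S_2,S_3$ over $j$ in \eqref{eq:deg2} (with $\Phi=\tpsi$, $\Phi'=\PhiKZ$). Since $\tilde d_v=-u_v$ for odd-length degree-$1$ words $v$ and $\tilde d_v=0$ otherwise, each $S_i$ collapses to a single-parity sum of products $u_vu_{v'}$, and in every such product one rewrites the even-index factor(s) via \eqref{eq:c2n} as $-J_1^{p+q}c_{p+q}{p+q\choose p}(-1)^p$. For $|w|$ odd each surviving product has exactly one even-index factor, and the substitutions $2s=a+j$ (in $S_1$), $2s=(a-j)+b$ (in $S_2$), $2s=b+(c-j)$ (in $S_3$) turn $S_1+S_2+S_3$ into precisely $\tfrac12 T$; combined with $\tilde d_w=-u_w-\tfrac12 T$ this gives $\tilde f_w=\tilde d_w+u_w+S_1+S_2+S_3=0$, the first line of \eqref{eq:atdeg2}. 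For $|w|$ even both factors of each product are even-index, so after substitution the $S_i$ become double sums in $c_{2l}c_{2m}$ carrying $J_1^{2l}J_1^{2m}=J_2^{l,m}+J_2^{m,l}$ (splitting the square $[0,\tfrac12]^2$ along its diagonal); symmetrising the summation indices and identifying the mixed terms $\zeta(2s+1)\zeta(p+c+1)$ and $\zeta(2s+1)\zeta(a+q+1)$ in $K(a,b,c)$ with the pieces produced by $S_1,S_2,S_3$, one collects $\tilde d_w+S_1+S_2+S_3$ into exactly $K(a,b,c)$, so $\tilde f_w=u_w+K(a,b,c)$.

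The conceptual heart is the odd-length case, where the cancellation $\tilde f_w=0$ rests entirely on recognising that the defining relation \eqref{eq:cab} for the $c_{\alpha,\beta}$ is precisely what kills the $T$-contributions. The main obstacle is the even-length bookkeeping: one has to match the symmetrised $J_2^{l,m}c_{2l}c_{2m}$ double sums and the mixed $\zeta\cdot\zeta$ sums against the exact three-term combination of binomial coefficients and signs in $K(a,b,c)$, while carefully handling the implicit summation ranges (enforced automatically by ${2l\choose a}=0$ for $a>2l$) and the degenerate terms (e.g.\ $p=c=0$, or $l=0$, or $m=0$) where some $u$'s vanish but a naive formula would invoke the undefined $c_0$.
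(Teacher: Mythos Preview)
Your degree-$1$ computation and your odd-length degree-$2$ argument are fine and agree with the paper: you correctly identify the single-integral contribution $J_1^{2n}\sum c_{\alpha,\beta}(\cdots)$, halve the relation \eqref{eq:cab} via $J_1^{2n}=\tfrac12 I_1^{2n}$, and cancel the remainder against $S_1+S_2+S_3=\tfrac12 T$.

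The even-length degree-$2$ case, however, has a genuine gap. The path-ordered exponential defining $\tpsi$ is taken with respect to the $\grt_1$-action, not the ordinary associative product in $\PSKxy$. Concretely, the degree-$2$ part of the double-integral term $x_{2l+1}x_{2m+1}$ is
\[
c_{2l}c_{2m}\Bigl(\ad_x^{2l}(y)\,\ad_x^{2m}(y)\;+\;\bigl[y,\ad_x^{2l}(y)\bigr]\partial_y\ad_x^{2m}(y)\Bigr)
= c_{2l}c_{2m}\Bigl(\ad_x^{2l}(y)\,\ad_x^{2m}(y)+\ad_x^{2m}\bigl([y,\ad_x^{2l}(y)]\bigr)\Bigr),
\]
and the second summand produces the two extra binomial terms $\binom{2m}{a}\binom{2l}{b}(-1)^{a+b}$ and $-\binom{2m}{c}\binom{2l}{b}(-1)^{b+c}$ in the first sum of $K(a,b,c)$. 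Your formula $\tilde d_w=\sum_{l,m}J_2^{l,m}c_{2l}c_{2m}\binom{2l}{a}\binom{2m}{c}(-1)^{a-c}$ keeps only the ordinary-product piece and therefore misses these.

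Your correction sums $S_1,S_2,S_3$ do \emph{not} recover what is lost: carrying out your own substitutions $2s=a+j$, $2s=(a-j)+b$, $2s=b+(c-j)$ one finds exactly
\[
S_1+S_2+S_3=\sum_{\substack{p,s\\2s=a+b-p}}J_1^{2s}c_{2s}\,u_{x^pyx^c}\Bigl(\tbinom{2s}{a}(-1)^a-\tbinom{2s}{b}(-1)^b\Bigr)
+\sum_{\substack{q,s\\2s=b+c-q}}J_1^{2s}c_{2s}\,u_{x^ayx^q}\tbinom{2s}{b}(-1)^b,
\]
which after the simplification $J_1^{2s}c_{2s}u_{x^pyx^c}=\tfrac{(-1)^{c+1}(p+c)!}{(2\pi i)^{2n}p!c!}\zeta(2s+1)\zeta(p+c+1)$ is precisely the last two sums of $K(a,b,c)$ and nothing more. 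So with your $\tilde d_w$ you obtain $u_w+(\text{first binomial term of }K)+(\text{two }\zeta\text{-sums})$, falling short of $u_w+K(a,b,c)$ by the two missing binomial pieces. (For $w=x^2yx^4y$ these two pieces happen to cancel, but for a generic even-length word such as $xyx^2yx^2$ they do not.) Once you correct $\tilde d_w$ to include the derivation part of the circle product, your route via \eqref{eq:deg2} becomes equivalent to the paper's direct expansion of the action.
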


\begin{proof}
Recall that $\tpsi$ is given by
\begin{equation*}
\tpsi(x,y):=\mathcal{T}exp\left(\int\limits_{0}^{\frac{1}{2}}{x(s)ds}\right).
\end{equation*}
Extending this element yields,
\begin{align*}
\tpsi(x,y)=&\mathcal{T}\exp\left(\int\limits^{\frac{1}{2}}_{0}{x(s)ds}\right)=1+\int\limits^{\frac{1}{2}}_{0}{x(s)ds}+\int\limits^{\frac{1}{2}}_{0}{x(s_1)\int\limits^{s_1}_{0}{x(s_2)ds_2}ds_1}+\dots\\
=&1+\int\limits_{0}^{\frac{1}{2}}{\sum\limits_{j=1}^{\infty}{x_{2j+1}(s(s-1))^{2j}ds}}\\
&+\int\limits^{\frac{1}{2}}_{0}{\sum\limits_{l=1}^{\infty}{x_{2l+1}(s_1(s_1-1))^{2l}}\int\limits^{s_1}_{0}{\sum\limits_{m=1}^{\infty}{x_{2m+1}(s_2(s_2-1))^{2m}}ds_2}ds_1}+\dots\\
=&1+\sum\limits_{j=1}^{\infty}{x_{2j+1}\int\limits_{0}^{\frac{1}{2}}{(s(s-1))^{2j}ds}}\\
&+\sum\limits_{l=1}^{\infty}{\sum\limits_{m=1}^{\infty}{x_{2l+1}x_{2m+1}\int\limits^{\frac{1}{2}}_{0}{(s_1(s_1-1))^{2l}}\int\limits^{s_1}_{0}{(s_2(s_2-1))^{2m}}ds_2}ds_1}+\dots\\
=&1+\sum\limits_{j=1}^{\infty}{x_{2j+1}J_1^{2j}}+\sum\limits_{l=1}^{\infty}{\sum\limits_{m=1}^{\infty}{x_{2l+1}x_{2m+1}J_2^{l,m}}}+\dots
\end{align*}
where $J_{2}^{l,m}$ is as in equation \eqref{abbr}.
Replacing $x_{2j+1}$ gives,
\begin{align*}
\tpsi(x,y)=&1+\sum\limits_{j=1}^{\infty}{J_1^{2j}\Big(c_{2j}\ad_{x}^{2j}(y)+\sum\limits_{\substack{0\leq\alpha<\beta\\ \alpha+\beta+2=2j+1}}{c_{\alpha,\beta}\left[\ad_x^{\alpha}(y),\ad_x^{\beta}(y)\right]}+\dots\Big)}\\
&+\sum\limits_{l=1}^{\infty}{\sum\limits_{m=1}^{\infty}{x_{2l+1}x_{2m+1}J_2^{l,m}}}+\dots.
\end{align*}

We prove formula \eqref{eq:atdeg1}. Let $w=x^ayx^b$ with $a,b\in \mathbb{N}_{0}$ such that $\left|w\right|=a+b+1=2n+1$. To find its coefficient, we have to consider the product,
\begin{align*}
(\tpsi\cdot\PhiKZ)(x,y)&=(1+J_1^{2n}x_{2n+1}+\dots)(1+u_{x^ayx^b}x^ayx^b+\dots)\\
&=(1+J_1^{2n}(c_{2n}\ad_x^{2n}(y)+\dots)+\dots)(1+u_{x^ayx^b}x^ayx^b+\dots)\\
&=1+J_1^{2n}c_{2n}\sum\limits_{i=0}^{2n}{{2n\choose i}(-1)^{i}x^iyx^{2n-i}}\cdot 1+1\cdot u_{x^ayx^b}x^ayx^b+\dots\\
&=1+J_{1}^{2n}c_{2n}(-1)^a{2n \choose a}x^ayx^b+u_{x^ayx^b}x^ayx^b+\dots\\
&\stackrel{!}{=}1+\tilde{f}_{x^ayx^b}x^ayx^b+\dots=\PhiAT(x,y)
\end{align*}
The coefficient $\tilde{f}_w$ is therefore given by,
\begin{equation*}
\tilde{f}_w=u_w+J_{1}^{2n}c_{2n}(-1)^a{2n \choose a}=u_w-2\frac{1}{2}u_{w}=0,
\end{equation*}
where we used,
\begin{equation*}
\frac{\left((2n)!\right)^2}{(4n+1)!} c_{2n}(-1)^{a}{2n\choose a}=-2u_{x^ayx^{b}}
\end{equation*}
which is exactly equation \eqref{eq:c2n}.

Let us consider now a word of even length, i.e. $w=x^ayx^b$ with $a,b\in \mathbb{N}_{0}$ such that $a+b+1=2n$. Then $\tpsi$ does not contribute anything to the coefficients and we have,
\begin{equation*}
\tilde{f}_w=u_{w}.
\end{equation*}

To show \eqref{eq:atdeg2}, note that a word $w=x^ayx^byx^c$ of odd length, i.e. $a,b,c \in \mathbb{N}_{0}$ with $a+b+c+2=2n+1$, appears in the following sum. We have labeled the terms with their origin within the product $\tpsi\cdot\PhiKZ$.
\begin{align*}
&\underbrace{1}_{\tpsi}\cdot \underbrace{\sum\limits_{\substack{\tilde{w};deg(\tilde{w})=2\\\left|\tilde{w}\right|=2n+1}}{u_{\tilde{w}} \tilde{w}}}_{\PhiKZ}+\underbrace{\sum\limits_{\substack{0\leq \alpha<\beta\\ \alpha+\beta+2=2n+1}}{J_1^{2n}c_{\alpha,\beta}\left[\ad_x^{\alpha}(y),\ad_x^{\beta}(y)\right]}}_{\tpsi}\cdot \underbrace{1}_{\PhiKZ}\\
&+\sum\limits_{\substack{s\\2s+p+q+2=2n+1}}\sum\limits_{\substack{p,q\\v=x^pyx^q}}{\left(\underbrace{J_1^{2s}c_{2s}\ad_x^{2s}(y)}_{\tpsi} \underbrace{u_v v}_{\PhiKZ}+[y,\underbrace{J_1^{2s}c_{2s}\ad_x^{2s}}_{\tpsi}]\partial_y(\underbrace{u_v v}_{\PhiKZ})\right)}.
\end{align*}
With a similar reasoning as in the case where we determined the $x_{2j+1}$'s, we obtain,
\begin{align*}
\tilde{f}_w=&u_w+J_{1}^{2n}\sum\limits_{\substack{0\leq\alpha<\beta \\ \alpha+\beta+2=2n+1}}{c_{\alpha,\beta}\left({\alpha\choose a}{\beta\choose c}(-1)^{a+\beta-c}-{\alpha\choose c}{\beta\choose a}(-1)^{a+\alpha-c}\right)}\\
&+\sum\limits_{\substack{p\in \mathbb{N}_0,s\in \mathbb{N}\\2s=b+a-p}}{J_1^{2s} c_{2s}u_{x^pyx^c}\left({2s\choose a}(-1)^a-{2s\choose b}(-1)^{b}\right)}\\
&+\sum\limits_{\substack{q \in \mathbb{N}_0, s\in \mathbb{N}\\ 2s=b+c-q}}{J_{1}^{2s}c_{2s}u_{x^ayx^q}{2s\choose b}(-1)^b}.
\end{align*}
But then, since $2J_1^{2n}=I_1^{2n}$ (see the proof of lemma \ref{lemma:integral}),
\begin{align*}
2\tilde{f}_w=&2u_w+I_{1}^{2n}\sum\limits_{\substack{0\leq\alpha<\beta \\ \alpha+\beta+2=2n+1}}{c_{\alpha,\beta}\left({\alpha\choose a}{\beta\choose c}(-1)^{a+\beta-c}-{\alpha\choose c}{\beta\choose a}(-1)^{a+\alpha-c}\right)}\\
&+\sum\limits_{\substack{p\in \mathbb{N}_0,s\in \mathbb{N}\\2s=b+a-p}}{I_1^{2s} c_{2s}u_{x^pyx^c}\left({2s\choose a}(-1)^a-{2s\choose b}(-1)^{b}\right)}\\
&+\sum\limits_{\substack{q \in \mathbb{N}_0, s\in \mathbb{N}\\ 2s=b+c-q}}{I_{1}^{2s}c_{2s}u_{x^ayx^q}{2s\choose b}(-1)^b}\\
=&0
\end{align*}
as this is exactly the equation satisfied by the $c_{\alpha,\beta}$ (equation \eqref{eq:cab}).

A word $w=x^ayx^byx^c$ of even length, i.e. $a+b+c+2=2n$, will be contained in the sum (coming from the product $(\tpsi\cdot \PhiKZ)(x,y)$)
\begin{align*}
&\underbrace{1}_{\tpsi}\cdot \underbrace{\sum\limits_{\substack{\tilde{w};deg(\tilde{w})=2\\\left|\tilde{w}\right|=2n}}{u_{\tilde{w}} \tilde{w}}}_{\PhiKZ}+\underbrace{\sum\limits_{\substack{l,m\\2l+2m+2=2n}}{J_2^{l,m}x_{2l+1}x_{2m+1}}}_{\tpsi}\cdot \underbrace{1}_{\PhiKZ}\\
&+\sum\limits_{\substack{s\\2s+p+q+2=2n}}\sum\limits_{\substack{p,q\\v=x^pyx^q}}{\left(\underbrace{J_1^{2s}c_{2s}\ad_x^{2s}(y)}_{\tpsi} \underbrace{u_v v}_{\PhiKZ}+[y,\underbrace{J_1^{2s}c_{2s}\ad_x^{2s}}_{\tpsi}]\partial_y(\underbrace{u_v v}_{\PhiKZ})\right)}.
\end{align*}
The product $x_{2l+1}x_{2m+1}$ is given by
\begin{align*}
x_{2l+1}x_{2m+1}=&c_{2l}c_{2m}\left(\ad_{x}^{2l}(y)\ad_{x}^{2m}(y)+\left[y,\ad_x^{2l}(y)\right]\partial_y(\ad_x^{2m}(y))\right)\\
=&c_{2l}c_{2m}\left(\ad_{x}^{2l}(y)\ad_{x}^{2m}(y)+\ad_x^{2m}(\left[y,\ad_x^{2l}(y)\right])\right)
\end{align*}
Using that
\begin{equation*}
\ad_x^{2l}(y)\ad_x^{2m}(y)=\sum\limits_{i=0}^{2l}{\sum\limits_{j=0}^{2m}{{2l\choose i}{2m\choose j}(-1)^{i+j}x^iyx^{2l-i+j}yx^{2m-j}}}
\end{equation*}
and
\begin{equation*}
\ad_x^{2m}(\left[y,\ad_x^{2l}(y)\right])=\sum\limits_{i=0}^{2m}{\sum\limits_{j=0}^{2l}{{2m \choose i}{2l\choose j}(-1)^{i+j}(x^iyx^jyx^{2m+2l-i-j}-x^{i+j}yx^{2l-j}yx^{2m-i})}},
\end{equation*}
we find, by picking out the word $w=x^ayx^byx^c$ in the above sum, that the contributing coefficients are given by,
\begin{align*}
\tilde{f}_w=&u_w+\sum\limits_{\substack{l,m\\2l+2m+2=2n}}J_2^{l,m}c_{2l}c_{2m}\biggl({2l\choose a}{2m\choose c}(-1)^{a-c}\\
&+{2m\choose a}{2l\choose b}(-1)^{a+b}-{2m\choose c}{2l\choose b}(-1)^{b+c}\biggr)\\
&+\sum\limits_{\substack{p\in \mathbb{N}_0,s\in \mathbb{N}\\2s=b+a-p}}{J_1^{2s} c_{2s}u_{x^pyx^c}\left({2s\choose a}(-1)^a-{2s\choose b}(-1)^{b}\right)}\\
&+\sum\limits_{\substack{q \in \mathbb{N}_0, s\in \mathbb{N}\\ 2s=b+c-q}}{J_1^{2s} c_{2s}u_{x^ayx^q}{2s\choose b}(-1)^b}.
\end{align*}

We simplify the expression above. For this, let us consider the product,
\begin{align*}
J_{1}^{2s}c_{2s}u_{x^pyx^c}=&\frac{1}{2}I_1^{2s}\cdot\frac{2(4s+1)!\zeta(2s+1)}{(2\pi i)^{2s+1}((2s)!)^2}\cdot \frac{-1}{(2\pi i)^{p+c+1}}\zeta(x^pyx^c)\\
&=\frac{1}{2}\frac{((2s)!)^2}{(4s+1)!}\cdot\frac{2(4s+1)!\zeta(2s+1)}{(2\pi i)^{2s+1}((2s)!)^2}\cdot \frac{-1}{(2\pi i)^{p+c+1}}(-1)^c\frac{(p+c)!}{p!c!}\zeta(p+c+1)\\
&=\frac{(-1)^{c+1}(p+c)!}{(2\pi i)^{2s+p+c+2}p!c!}\zeta(2s+1)\zeta(p+c+1)\\
&=\frac{(-1)^{c+1}(p+c)!}{(2\pi i)^{2n}p!c!}\zeta(2s+1)\zeta(p+c+1).
\end{align*}
Similarly,
\begin{equation*}
J_{1}^{2s}c_{2s}u_{x^ayx^q}=\frac{(-1)^{q+1}(a+q)!}{(2\pi i)^{2n}a!q!}\zeta(2s+1)\zeta(a+q+1).
\end{equation*}
Therefore,
\begin{equation}\label{eq:tfw}
\begin{aligned}
\tilde{f}_w=&u_w+\sum\limits_{\substack{l,m\\2l+2m+2=2n}}J_2^{l,m}c_{2l}c_{2m}\biggl({2l\choose a}{2m\choose c}(-1)^{a-c}\\
&+{2m\choose a}{2l\choose b}(-1)^{a+b}-{2m\choose c}{2l\choose b}(-1)^{b+c}\biggr)\\
&+\sum\limits_{\substack{p\in \mathbb{N}_0,s\in \mathbb{N}\\2s=b+a-p}}{\frac{(-1)^{c+1}(p+c)!}{(2\pi i)^{2n}p!c!}\zeta(2s+1)\zeta(p+c+1)\left({2s\choose a}(-1)^a-{2s\choose b}(-1)^{b}\right)}\\
&+\sum\limits_{\substack{q \in \mathbb{N}_0, s\in \mathbb{N}\\ 2s=b+c-q}}{\frac{(-1)^{q+1}(a+q)!}{(2\pi i)^{2n}a!q!}\zeta(2s+1)\zeta(a+q+1){2s\choose b}(-1)^b }.
\end{aligned}
\end{equation}

\end{proof}
\subsection{Proof of Theorem \ref{thm:AT}}
Finally, we are in a position to prove our second main result.
\begin{proof}[Proof of Theorem \ref{thm:AT}]
We simply use formula \eqref{eq:tfw} for $w=x^2yx^4y$ to get,
\begin{align*}
\tilde{f}_w=&u_w+\sum\limits_{\substack{l,m\\2l+2m+2=8}}J_2^{l,m}c_{2l}c_{2m}\bigg({2l\choose 2}{2m\choose 0}(-1)^{2}\\
&+{2m\choose 2}{2l\choose 4}(-1)^{6}-{2m\choose 0}{2l\choose 4}(-1)^{4}\bigg)\\
&+\sum\limits_{\substack{p\in \mathbb{N}_0,s\in \mathbb{N}\\2s=6-p}}{\frac{(-1)^{0+1}(p+0)!}{(2\pi i)^{8}p!0!}\zeta(2s+1)\zeta(p+0+1)\left({2s\choose 2}(-1)^2-{2s\choose 4}(-1)^{4}\right)}\\
&+\sum\limits_{\substack{q \in \mathbb{N}_0, s\in \mathbb{N}\\ 2s=4-q}}{\frac{(-1)^{q+1}(2+q)!}{(2\pi i)^{8}2!q!}\zeta(2s+1)\zeta(2+q+1){2s\choose 4}(-1)^4}.
\end{align*}
Expanding everything gives,
\begin{align*}
\tilde{f}_w=&u_w+J_2^{1,2}c_{2}c_{4}\bigg({2\choose 2}{4\choose 0}(-1)^{2}+{4\choose 2}\underbrace{{2\choose 4}}_{=0}(-1)^{6}-{4\choose 0}\underbrace{{2\choose 4}}_{=0}(-1)^{4}\bigg)\\
&+J_2^{2,1}c_{4}c_{2}\bigg({4\choose 2}{2\choose 0}(-1)^{2}+{2\choose 2}{4\choose 4}(-1)^{6}-{2\choose 0}{4\choose 4}(-1)^{4}\bigg)\\
&+\frac{(-1)4!}{(2\pi i)^{8}4!}\zeta(3)\zeta(5)\Bigg({2\choose 2}(-1)^2-\underbrace{{2\choose 4}}_{=0}(-1)^{4}\Bigg)\\
&+\frac{(-1)2!}{(2\pi i)^{8}2!}\zeta(5)\zeta(3)\left({4\choose 2}(-1)^2-{4\choose 4}(-1)^{4}\right)\\
&+\frac{(-1)0!}{(2\pi i)^{8}0!}\zeta(7)\underbrace{\zeta(1)}_{=0}\left({6\choose 2}(-1)^2-{6\choose 4}(-1)^{4}\right)\\
&+\frac{(-1)^{3}(2+2)!}{(2\pi i)^{8}2!2!}\zeta(3)\zeta(5)\underbrace{{2\choose 4}}_{=0}(-1)^4\\
&+\frac{(-1)2!}{(2\pi i)^{8}2!}\zeta(5)\zeta(3){4\choose 4}(-1)^4.
\end{align*}
Again, we mark the terms which equal 0 (note that we set $\zeta(1)=0$). This simplifies to
\begin{align*}
\tilde{f}_w=&u_w+J_2^{1,2}c_{2}c_{4}{2\choose 2}{4\choose 0}+J_2^{2,1}c_{4}c_{2}\bigg({4\choose 2}{2\choose 0}+{2\choose 2}{4\choose 4}-{2\choose 0}{4\choose 4}\bigg)\\
&+\frac{(-1)}{(2\pi i)^{8}}\zeta(3)\zeta(5){2\choose 2}+\frac{(-1)}{(2\pi i)^{8}}\zeta(5)\zeta(3)\left({4\choose 2}-{4\choose 4}\right)+\frac{(-1)}{(2\pi i)^{8}}\zeta(5)\zeta(3){4\choose 4}.
\end{align*}
Hence,
\begin{align*}
\tilde{f}_w=&u_w+J_2^{1,2}c_{2}c_{4}+J_2^{2,1}c_{4}c_{2}(6+1-1)\\
&+\frac{(-1)}{(2\pi i)^{8}}\zeta(3)\zeta(5)\\
&+\frac{(-1)}{(2\pi i)^{8}}\zeta(5)\zeta(3)\left(6-1\right)\\
&+\frac{(-1)}{(2\pi i)^{8}}\zeta(5)\zeta(3)\\
=&u_w+c_{2}c_{4}(J_2^{1,2}+6J_2^{2,1})-\frac{7}{(2\pi i)^{8}}\zeta(3)\zeta(5).
\end{align*}
Moreover,
\begin{align*}
J_2^{1,2}&=\frac{238}{51609600}\\
J_2^{2,1}&=\frac{1199}{154828800}\\
c_2&=\frac{2(4+1)!\zeta(3)}{(2\pi i)^3 ((2)!)^2}=\frac{60\zeta(3)}{(2\pi i)^3}\\
c_4&=\frac{2(8+1)!\zeta(5)}{(2\pi i)^5 ((4)!)^2}=\frac{1260\zeta(5)}{(2\pi i)^5}\\
u_{x^2yx^4y}&=\frac{1}{(2\pi i)^8}\zeta(3,5).
\end{align*}
Therefore,
\begin{align*}
\tilde{f}_w&=\frac{\zeta(3,5)}{(2\pi i)^8}+\frac{60\cdot 1260 \zeta(3)\zeta(5)}{(2\pi i)^8}\left(\frac{238}{51609600}+6\frac{1199}{154828800}\right)-\frac{7}{(2\pi i)^{8}}\zeta(3)\zeta(5)\\
&=\frac{\zeta(3,5)}{(2\pi i)^8}-\frac{6293}{2048}\frac{\zeta(3)\zeta(5)}{(2\pi i)^8}=\frac{2048\zeta(3,5)-6293\zeta(3)\zeta(5)}{2048\cdot256\pi^8}\\
&=\frac{2048\zeta(3,5)-6293\zeta(3)\zeta(5)}{524288\pi^8}.
\end{align*}

The proof of the irrationality of $\tilde{f}_w$ is analogous to the case of the coefficient $f_w$ in $\PhiOH$ (see Theorem \ref{thm:OH}).
\end{proof}

\bibliographystyle{plain}
\bibliography{biblio}

\end{document}